\newcounter{enunciato}[section]
\newcounter{enunciat}[subsection]
\newtheorem{ittheorem}{Theorem}
\newtheorem{ittheoremm}{Theorem}
\newtheorem{itlemma}{Lemma}
\newtheorem{itlemmaa}{Lemma}
\newtheorem{itproposition}{Proposition}
\newtheorem{itpropositiona}{Proposition}
\newtheorem{itdefinition}{Definition}
\newtheorem{itconjecture}{Conjecture}
\newtheorem{itassumption}{Assumption}
\newtheorem{itcorollary}{Corollary}
\newtheorem{itcorollarya}{Corollary}
\newtheorem{itremark}{Remark}
\newenvironment{remark}{\addtocounter{enunciato}{1}   	\begin{itremark}}{\end{itremark}}
\newenvironment{theorem}{\addtocounter{enunciato}{1}
	\begin{ittheorem}}{\end{ittheorem}}
\newenvironment{lemma}{\addtocounter{enunciato}{1}
	\begin{itlemma}}{\end{itlemma}}
\newenvironment{proposition}{\addtocounter{enunciato}{1}
	\begin{itproposition}}{\end{itproposition}}
\newenvironment{definition}{\addtocounter{enunciato}{1}
	\begin{itdefinition}}{\end{itdefinition}}
\newenvironment{assumption}{\addtocounter{enunciato}{1}
	\begin{itassumption}}{\end{itassumption}}
\newenvironment{corollary}{\addtocounter{enunciato}{1}
	\begin{itcorollary}}{\end{itcorollary}}
\newenvironment{proof}{\noindent {\em Proof}.\,\,}
{\hspace*{\fill}$\square$\medskip}
\newenvironment{proofofp}{\noindent {\em Proof of Proposition\,\,}}
{\hspace*{\fill}$\square$\medskip}
\newenvironment{proofoft}{\noindent {\em Proof of Theorem\,\,}}
{\hspace*{\fill}$\square$\medskip}
\newcommand\restr[2]{{
		\left.\kern-\nulldelimiterspace 
		#1 
		\vphantom{\big|} 
		\right|_{#2} 
}}
\newcommand\smallrestr[2]{{
		\left.\kern-\nulldelimiterspace 
		#1 
		\vphantom{|} 
		\right|_{#2} 
}}
\def \R {{\mathbb R}}
\def \N {{\mathbb N}}
\def \ra {\rightarrow}
\def \barr{\begin{array}}
	\def \earr {\end{array}}
\def \cH {{\mathcal H}}
\def \cC {{\mathcal C}}
\def \cP {{\mathcal P}}
\def \cA {{\mathcal A}}
\def \cF {{\mathcal F}}
\newcommand{\Pw}{\mathcal{P}_2(\R)}
\newcommand{\qaq}{\quad \text{ and } \quad}
\newcommand{\qf}{\quad \text{ for }  }
\newcommand{\qfa}{\quad \text{ for all }  }
\definecolor{darkgreen}{rgb}{0,.6,0}
\definecolor{darkagenta}{rgb}{.5,0,.5}
\definecolor{darkred}{rgb}{1,0,0}
\definecolor{darkblue}{rgb}{0,0,.4}
\definecolor{black}{rgb}{0,0,0}
\definecolor{gray}{rgb}{.4,.4,.4}
\definecolor{white}{rgb}{0.99,0.99,0.99}
\definecolor{geel1}{rgb}{1,1,0.3}
\newcommand{\mr}{\mathrm{m}}
\numberwithin{equation}{section}
\newcommand{\eee}{\mathrm{e}}
\newcommand{\ACLio}{\cA\cC((0,\infty);\Pw)}
\newcommand\CouplW[1]{\mathrm{Cpl}(#1)}
\newcommand\Lrm {\mathrm{L}} 
\begin{document}
	\bibliographystyle{abbrv}
\title{On the long-time behaviour of McKean-Vlasov paths
}

\author{
	\renewcommand{\thefootnote}{\arabic{footnote}}
	K.\ Bashiri\footnotemark[1]  
}

\footnotetext[1]{
	Institut f\"ur Angewandte Mathematik,
	Rheinische Friedrich-Wilhelms-Universit\"at, 
	Endenicher Allee 60, 53115 Bonn, Germany.
	Email:
	bashiri@iam.uni-bonn.de.
}

 \vspace{-3cm}
\maketitle
 
 \vspace{-1cm}
 \begin{abstract}
 It is well-known that, in a certain parameter regime, the so-called \emph{McKean-Vlasov evolution} $ (\mu_t)_{t\in [0,\infty)} $ admits exactly three \emph{stationary states}.
 In this   paper we study the \emph{long-time behaviour} of the flow $ (\mu_t)_{t\in [0,\infty)} $ in this regime. 
 The main result is that,  
 for any initial measure $ \mu_0 $, the flow $ (\mu_t)_{t\in [0,\infty)} $   converges to a stationary state as $ t\ra \infty $ (see Theorem \ref{ThmConv}).
Moreover, we show that if the energy of the  initial measure is below some critical threshold, then the limiting stationary state can be identified (see Proposition \ref{PropIntroSmallBasin}). 
Finally, we also show some topological properties of the basins of attraction of the McKean-Vlasov evolution  (see Proposition \ref{PropIntroBasin}). 
 The proofs are based on the representation of $ (\mu_t)_{t\in [0,\infty)} $   as a \emph{Wasserstein gradient flow}.
 
Some results  of this paper are not entirely new. 
The main contribution here is to  show that the \emph{Wasserstein framework} provides short and elegant proofs for these results.
  However, up to the author's best knowledge, the statement on the  topological properties of the basins of attraction (Proposition \ref{PropIntroBasin}) is a new result.

 \let\thefootnote\svthefootnote{  
\medskip
\noindent
{\bf Key words and phrases.} 
Wasserstein gradient flows, McKean-Vlasov evolution, ergodicity, basin of attraction.

\smallskip
\noindent
{\bf 2010 Mathematics Subject Classification.} 
35B40; 35B30; 35K55;   	49J40;	60G10.
}
\let\thefootnote\relax\footnotetext[0]{
K.B.\   is partially  supported by the Deutsche Forschungsgemeinschaft (DFG, German Research Foundation) - Projektnummer 211504053 - SFB 1060 and by Germany's Excellence Strategy -- GZ 2047/1, projekt-id 390685813 --  ``Hausdorff Center for Mathematics'' at Bonn University.}
\end{abstract}


\section{Introduction}
In this paper we study the \emph{ergodicity} and the \emph{energy landscape} of the  flow $ (\mu_t)_{t\in [0,\infty)} $ of  marginal laws associated to  the stochastic differential equation given by
\begin{align}\label{EqSDEBasin}
	dx_t ~=~ - \Psi'(x_t) \, dt + J \int_{\R} z \, d\mu_t(z) \, dt + \sqrt{2}\,dB_t.
\end{align}
Here,  
the \emph{single-site potential} $ \Psi:\R \ra \R $ and the \emph{interaction strength} $ J\in \R $ satisfy Assumption \ref{AssBasin} below, and $ B $ is a one-dimensional Brownian motion.
This flow $ (\mu_t)_{t\in [0,\infty)} $  is often called \emph{McKean-Vlasov evolution} in the literature.

In order to understand the main motivation for this paper, we recall five  well-known facts.
\begin{enumerate}[(i)]
	\setlength\itemsep{-0.1em} 
	\item Let $ (\Pw, W_2) $ be the \emph{Wasserstein space}; see Section \ref{SubsecWassSpace}  below. Then, we know from \cite[Chapter 11]{ambgigsav} and \cite{CarrilloMcCannVillani} that $ (\mu_t)_{t\in [0,\infty)} $ can be represented as a so-called \emph{Wasserstein gradient flow} (again see Section \ref{SubsecWassSpace})  for the functional  $ \cF:\Pw \ra (-\infty,\infty]  $, which is  defined by 
	\begin{align}\label{EqDefiF}
		\cF(\mu )~=~ 
		\int_{\R}  \log(\rho) d\mu +  \int_{\R}  \Psi \, d\mu  - \frac{J}{2} \left( \int_{\R} z \, d\mu(z) \right) ^2
	\end{align}
	if $ \mu \in \Pw $ has a Lebesgue density $ \rho $,  and $ \cF(\mu) = \infty $ otherwise.
	Moreover, in \cite[11.2.8]{ambgigsav} it is shown that   for all $ \mu \in \overline{D(\cF)}=\Pw $ (where $ D(\cF)= \{ \mu \in \Pw \,|\, \cF(\mu) < \infty\} $), there exists a unique
	Wasserstein gradient flow for $ \cF $ with initial value $ \mu $. In this paper, we denote this gradient flow   by $ (S[\mu](t) )_{t\in (0,\infty)}$.
	\item Consider  the  system of $ N \in \N $ \emph{mean-field interacting diffusions} given by
	\begin{align}\label{EqIntroMicrSystem}
		\begin{split}
			dx^{N}_i (t)&= - \Psi' \left(x^{N}_i (t)\right) \, dt + \frac{J}{N} \sum_{j=0	}^{N-1}  x_j^{N}(t)\, dt  + \sqrt{2  }\,  dB_i (t) \qf   0\leq i \leq N-1,
		\end{split}
	\end{align}  
	where  
	$ B^{N} = (B_i)_{i=0,\dots,N-1}  $ is an $ N $-dimensional Brownian motion.
	Let $ (L^N(t))_{t\in[0,\infty)} $ denote the corresponding 
	\emph{empirical distribution process}, i.e., 
	\begin{align}
		L^N(t)~=~\frac{1}{N}
		\sum_{i=0}^{N-1} \delta_{x^{N}_i (t)} \qfa t\in[0,\infty).
	\end{align}
	Then, ever since the   classic papers \cite{dawgae} and \cite{Gae}, it is known that the process $ (L^N(t))_{t\in[0,\infty)} $ converges weakly to the deterministic McKean-Vlasov evolution as $ N \ra \infty $. 	
	\item Already in the paper \cite{dawgaetunneling} it was conjectured that the process  $ (L^N(t))_{t\in[0,\infty)} $  exhibits \emph{metastable behaviour}\footnote{We refer the reader with no background in metastability to the monumental monographs in this subject given by \cite{BdH15} and \cite{OV04}.}.
	It is a long outstanding problem to verify this conjecture   rigorously. 
	Although some progress in this direction was established in the papers \cite{BaMe2019} and \cite{GvalaniSchlichting2020}, there are still many open and challenging questions.
	\item It is well-known   that, in order to analyse the metastable behaviour of a stochastic system, it is essential to have deep knowledge on the underlying \emph{energy landscape} of the system and its \emph{ergodicity}, i.e., its possible convergence towards stationary measures.  
	\item In order to study curves and other objects that belong to the infinite-dimensional \emph{space of probability measures},   the  \emph{Wasserstein formalism} provides a natural and convenient framework. 
	Indeed, ever since the seminal papers \cite{jko} and \cite{Ott01}, it is known that the  Wasserstein formalism  provides the structure of a \emph{Riemannian manifold} on the space of probability measures. 
	We refer to \cite[p.\ 421]{ArnrichMielkePeletierSavareVeneroni} or \cite[Section 1.4]{BaMe2019} for more arguments that speak in favour of the Wasserstein formalism.  
\end{enumerate}

We now formulate the main motivation for this paper. 
Combining the facts (ii), (iii) and (iv), we see that, in order to understand the metastable behaviour of $ (L^N(t))_{t\in[0,\infty)} $, it   is essential to study the \emph{ergodicity} and the \emph{energy landscape} of the  McKean-Vlasov evolution. 
Moreover, from fact (i) we see that the energy landscape   associated to $ (\mu_t)_{t\in [0,\infty)} $ is determined by the functional $ \cF $ and its \emph{basins of attraction}; see Proposition \ref{PropIntroBasin} for the precise definition of the latter.
This is the main motivation why we study the  ergodicity of $ (\mu_t)_{t\in [0,\infty)} $ and the basins of attraction of $ \cF $.
Finally, fact (v) explains why we use the Wasserstein setting as the framework for this paper. 


We make  the following assumptions  throughout this paper. 
\begin{assumption}\label{AssBasin}
	\begin{enumerate}[(1)]
		\setlength\itemsep{-0.1em} 
		\item There is a splitting $\Psi= \Psi_c + \Psi_b$ for some $ \Psi_c , \Psi_b \in C^2(\R) $, and there are constants~$ 0<c ,c' < \infty$ such that $ 		\Psi_c'' \geq c     \mbox{ and }   |\Psi_b|+|\Psi_b'|+|\Psi_b''| \leq c'  $ on $ \R $.
		\item There exist  $ \epsilon, c'' \in (0,\infty) $ such that  $\Psi(z) \geq c''( |z|^{2+\epsilon} -1)$ for all 
		$ z \in \R $.  
		\item $\Psi(z)=\Psi(-z)$ for all~$z \in \mathbb{R}$. 
		\item $ 1/J < \int_{\R} z^2 \, \eee^{-\Psi(z)  } \, dz  / (\int \eee^{-\Psi(z)  } \, dz)   $.  
		\item $ z \mapsto \Psi'(z) $ is  convex on $ [0,\infty) $.
	\end{enumerate}
\end{assumption}
In particular, Assumption \ref{AssBasin} is fulfilled if $ \Psi$  is   a polynomial of  degree $ 2 \ell $ for some $ \ell \in \N \cap [2,\infty) $  such that Assumption \ref{AssBasin} (4) and Assumption \ref{AssBasin} (5) are satisfied. 
In Section \ref{SecAssu} we briefly discuss  the assumptions we make in this paper.

An important  observation in Lemma \ref{LemStationary} is that, as an immediate consequence of Assumption \ref{AssBasin}, the system \eqref{EqSDEBasin}
admits exactly three \emph{stationary points} at some measures  $\mu^-,\mu^0, \mu^+ \in \Pw $, which  are defined in \eqref{EqBasinStationaryMeasures}; see Lemma \ref{LemStationary} for more details. 
We also mention here that, as we will see in Lemma \ref{LemRelationFH},  the measures  $ \mu^-  $ and  $  \mu^+ $ are the \emph{global minimizers} of the functional $ \cF $.

We   now  formulate the main result of this paper  in the following theorem. 
\begin{theorem}\label{ThmConv}
	Suppose Assumption \ref{AssBasin}.
	Let $ \mu \in \Pw $. 
	Then, there exists $ \mu^* \in \{\mu^-,\mu^0, \mu^+\} $ such that 
	\begin{align}
		\lim_{t\ra \infty } W_2(S[\mu](t),\mu^*) ~=~
		0 \qaq \lim_{t\ra \infty } \cF(S[\mu](t)) = \cF(\mu^* ).
	\end{align} 
\end{theorem}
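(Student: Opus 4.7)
The plan is to combine the Wasserstein gradient flow structure with the coercivity of $\Psi$ and the fact (Lemma \ref{LemStationary}) that $\cF$ admits exactly three critical points, and then finish by a connectedness argument for the $\omega$-limit set.

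By the regularizing effect of the gradient flow we have $\cF(S[\mu](t)) < \infty$ for every $t>0$, so after a harmless time shift I may assume $\cF(\mu) < \infty$. The energy dissipation identity from \cite[Chapter 11]{ambgigsav} then gives
$$\cF(S[\mu](t)) ~+~ \int_0^t |\partial \cF|^2(S[\mu](s))\, ds ~=~ \cF(\mu), \qquad t \geq 0,$$
so $t \mapsto \cF(S[\mu](t))$ is non-increasing and $\int_0^\infty |\partial \cF|^2(S[\mu](s))\, ds < \infty$. Using Assumption \ref{AssBasin}(2), the super-quadratic growth of $\Psi$ dominates both the (possibly negative) entropy term (bounded below via a standard Jensen estimate against a Gaussian-type reference) and the mean term $-\tfrac{J}{2}(\int z\, d\mu)^2$, so the sublevel sets of $\cF$ have uniformly bounded $(2{+}\epsilon)$-th moments. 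By Prokhorov this makes them relatively $W_2$-compact, and since the energy is monotone the entire orbit $\{S[\mu](t):t\geq 0\}$ lies in one such sublevel set and is therefore relatively compact in $(\Pw,W_2)$.

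Now consider the $\omega$-limit set $\omega(\mu):=\bigcap_{T>0}\overline{\{S[\mu](t):t\geq T\}}^{W_2}$, which is non-empty by relative compactness. For any $\nu \in \omega(\mu)$ and any sequence $t_n\to\infty$ with $S[\mu](t_n)\to\nu$, the finite dissipation bound together with the $W_2$-continuity of $t\mapsto S[\mu](t)$ allows me to select times $s_n$ close to $t_n$ with $|\partial \cF|(S[\mu](s_n)) \to 0$ and $S[\mu](s_n)\to\nu$; the lower semicontinuity of the metric slope then yields $|\partial\cF|(\nu)=0$, so $\nu$ is a stationary point of the flow. By Lemma \ref{LemStationary} this gives $\omega(\mu)\subseteq\{\mu^-,\mu^0,\mu^+\}$. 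Since $t\mapsto S[\mu](t)$ is continuous and the orbit is relatively compact, $\omega(\mu)$ is connected in $(\Pw,W_2)$; as the three stationary measures admit pairwise disjoint $W_2$-neighbourhoods, the only connected subsets of $\{\mu^-,\mu^0,\mu^+\}$ are singletons. Hence $\omega(\mu)=\{\mu^*\}$ for a unique $\mu^*\in\{\mu^-,\mu^0,\mu^+\}$, which is the claimed $W_2$-convergence. The energy convergence $\cF(S[\mu](t))\to\cF(\mu^*)$ then follows from the monotonicity and boundedness from below of $t\mapsto\cF(S[\mu](t))$, combined with the $W_2$-lower semicontinuity of $\cF$ applied along $S[\mu](t_n)\to\mu^*$.

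The main obstacle I anticipate is the step $\omega(\mu)\subseteq\{\mu^-,\mu^0,\mu^+\}$: converting the integral bound $\int_0^\infty |\partial\cF|^2\, ds<\infty$ into $|\partial\cF|(\nu)=0$ for each $\omega$-limit point $\nu$ requires careful coordination of the $W_2$-continuity of the orbit with the lower semicontinuity of the Wasserstein slope, and in particular some control ensuring that the slope cannot blow up between the times $t_n$ and the nearby $s_n$ where the slope is small. Once this identification is available, everything else is the soft topological argument using connectedness of $\omega$-limit sets together with the isolation of the three stationary measures.
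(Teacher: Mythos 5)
Your approach is genuinely different from the paper's and is, at heart, sound: you run a LaSalle-type argument, showing that the entire orbit is relatively $W_2$-compact, that every $\omega$-limit point has zero slope and hence lies in $\{\mu^-,\mu^0,\mu^+\}$ (Lemma \ref{LemStationary}), and then invoking connectedness of the $\omega$-limit set to conclude that it is a singleton. The paper instead proceeds much more concretely: Lemma \ref{LemCompactness} produces one stationary subsequential limit $\mu^*$ together with full energy convergence, and then a three-way case analysis on $\mu^*$ rules out any other subsequential limit $\nu^*$, using the mean $\Rm[\cdot]$ as a scalar order parameter together with an intermediate-value argument, Lemma \ref{LemRelationFH}, and the openness of the basins (Proposition \ref{PropBasin}). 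Your route is cleaner in the sense that it is a standard dynamical-systems template, it does not presuppose Propositions \ref{PropIntroSmallBasin} and \ref{PropBasin}, and it would generalize more readily to settings where there is no convenient scalar order parameter; the paper's route, by contrast, reuses structure (the mean, the basins) that it needs anyway for its other results.

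Two details in your write-up should be filled in. First, the ``coordination'' step you flag is indeed resolvable: given $t_n\to\infty$ with $S[\mu](t_n)\to\nu$, one can take $s_n\in[t_n-1,t_n+1]$ with $|\partial\cF|^2(S[\mu](s_n))\leq\int_{t_n-1}^{t_n+1}|\partial\cF|^2(S[\mu](r))\,dr\to 0$, and then the absolute-continuity bound together with Cauchy--Schwarz gives $W_2(S[\mu](s_n),S[\mu](t_n))\leq\big(\int_{t_n-1}^{t_n+1}|\partial\cF|^2(S[\mu](r))\,dr\big)^{1/2}\to 0$, so $S[\mu](s_n)\to\nu$ as well; lower semicontinuity of $|\partial\cF|$ then gives $|\partial\cF|(\nu)=0$. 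Second, and unflagged, your final sentence on energy convergence does not actually close: monotonicity gives that $\cF(S[\mu](t))\downarrow L^*$, and lower semicontinuity gives $\cF(\mu^*)\leq L^*$, but neither of these yields the needed reverse inequality $L^*\leq\cF(\mu^*)$ (note that $\mu^0$ is not a minimizer of $\cF$, so $\cF(\mu^*)$ need not be a lower bound for the energy). To finish you should either invoke the $\lambda$-convexity slope inequality \cite[2.4.9]{ambgigsav} as in Step 3 of Lemma \ref{LemCompactness}, or apply the regularization estimate \eqref{EqRegu} with reference measure $\nu=\mu^*$ starting from $S[\mu](t_n)$, which, since $\mu^*$ is stationary and $W_2(S[\mu](t_n),\mu^*)\to 0$, gives $L^*\leq\cF(\mu^*)$ upon sending $n\to\infty$ at any fixed positive time.
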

\begin{proof}
	The proof is postponed to Section \ref{SecProofThmConv}.
\end{proof}

As a by-product of the proof of Theorem \ref{ThmConv}, we obtain the following two propositions, which are interesting on their own. 
The first one   shows that inside the valleys of the set $ \{\mu\in\Pw\,|\, \cF(\mu)\leq \cF(\mu^0)  \} $ the convergence of the gradient flows  for $ \cF $ is determined by the sign of the mean of the initial value. 
\begin{proposition}\label{PropIntroSmallBasin}
	Suppose Assumption \ref{AssBasin}.
	Let $ \mu \in \Pw $ be such that $ \int_\R  z\, d\mu(z) \neq 0 $ and $ \cF(\mu)\leq \cF(\mu^0)  $.
	Then, 
	\begin{align} \label{EqSmallBasin0}
		\lim_{t\ra \infty } \cF(S[\mu](t)) ~=~ \cF(\mu^- )~=~ \cF(\mu^+ ),  
	\end{align}   
	and
	\begin{align} \label{EqSmallBasin00}
		&\lim_{t\ra \infty } W_2(S[\mu](t),\mu^-) ~=~ 0 \quad \text{if } \int_\R  z\, d\mu(z) < 0 \qaq \\
		&\lim_{t\ra \infty } W_2(S[\mu](t),\mu^+) ~=~ 0 \quad \text{if } \int_\R  z\, d\mu(z)> 0.
	\end{align}   
\end{proposition}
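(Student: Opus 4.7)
The plan is to apply Theorem~\ref{ThmConv} and then to rule out the case $\mu^* = \mu^0$ via a Lyapunov-type argument on the mean. Two ingredients are essential. First, $\mu^0$ is the \emph{unique} minimizer of $\cF$ on the affine hyperplane $H_0 := \{\nu\in\Pw : \int_\R z\, d\nu = 0\}$: indeed, on $H_0$ the interaction term in \eqref{EqDefiF} vanishes, so $\cF$ reduces (up to an additive constant) to the relative entropy with respect to the Gibbs measure $\propto \eee^{-\Psi}$, which under Assumption~\ref{AssBasin}~(3) coincides with $\mu^0$ (cf.\ Lemma~\ref{LemStationary}). Second, the energy-dissipation identity for the Wasserstein gradient flow implies that $t\mapsto \cF(S[\mu](t))$ is non-increasing, and that the equality $\cF(S[\mu](t_1)) = \cF(S[\mu](t_2))$ for some $t_1 < t_2$ forces $S[\mu](\cdot)$ to be constant on $[t_1,t_2]$.

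I would first show that the mean $m(t) := \int_\R z\, d S[\mu](t)(z)$ is continuous in $t$ and never zero. Continuity follows from the $W_2$-continuity of $t\mapsto S[\mu](t)$ together with the uniform $(2+\eps)$-moment bound along $\cF$-sublevel sets provided by Assumption~\ref{AssBasin}~(2), which upgrades $W_2$-convergence to convergence of first moments. For the non-vanishing, suppose by contradiction that $m(t_0) = 0$ for some $t_0 > 0$. The first ingredient yields $\cF(S[\mu](t_0)) \geq \cF(\mu^0)$, while monotonicity and the hypothesis give $\cF(S[\mu](t_0)) \leq \cF(\mu) \leq \cF(\mu^0)$. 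Hence all three quantities coincide, so by the second ingredient $S[\mu](t) \equiv \mu$ on $[0,t_0]$; combining $S[\mu](t_0) \in H_0$ with the uniqueness in the first ingredient then forces $\mu = \mu^0$, contradicting $m(0)\neq 0$.

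It follows that $m(t)$ has constant sign equal to $\mathrm{sgn}(m(0))$. Theorem~\ref{ThmConv} delivers $S[\mu](t)\to\mu^* \in\{\mu^-,\mu^0,\mu^+\}$ in $W_2$, and the moment control above lets me pass $m(t)\to \int_\R z\, d\mu^*$ to the limit. Since by Lemma~\ref{LemStationary} the means of $\mu^-,\mu^0,\mu^+$ are respectively strictly negative, zero, and strictly positive, the sign of $m(0)$ selects $\mu^* \in\{\mu^-,\mu^+\}$, which is \eqref{EqSmallBasin00}; equation \eqref{EqSmallBasin0} is then immediate from the symmetry $\Psi(-z)=\Psi(z)$, which yields $\cF(\mu^-)=\cF(\mu^+)$. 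The main obstacle I anticipate is the careful verification of the second ingredient: showing that constancy of $\cF$ over an interval along a Wasserstein gradient flow implies the curve is itself constant there requires the metric-slope version of the energy-dissipation identity in the Ambrosio--Gigli--Savar\'e framework, rather than any displacement convexity of $\cF$ (which fails here because of the concave interaction term).
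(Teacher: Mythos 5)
Your core Lyapunov idea --- that the mean $m(t)$ cannot cross zero without forcing $\cF$ to equal $\cF(\mu^0)$ over a time interval, which via the energy-dissipation identity \eqref{EqWGFDefi1} freezes the flow and ultimately pins $\mu=\mu^0$ --- is exactly the engine of the paper's proof, and your two ``ingredients'' correspond precisely to Lemma \ref{LemRelationFH} (uniqueness of $\mu^0$ as the $\cF$-minimizer among mean-zero measures) and the energy identity of Lemma \ref{LemProperties}~(iv). However, your framing is circular within the paper's logical structure: you invoke Theorem \ref{ThmConv} to obtain full $W_2$-convergence to some $\mu^*\in\{\mu^-,\mu^0,\mu^+\}$, but Theorem \ref{ThmConv} is proved in Section \ref{SecProofThmConv} \emph{using} Proposition \ref{PropIntroSmallBasin} (and Proposition \ref{PropBasin}, which also relies on it); Case~3 of that proof has no other way to dispose of the subcases $\mr[\nu^*]\neq 0$. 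What is actually available at this point is only Lemma \ref{LemCompactness}, which gives a \emph{subsequence} $\mu_{t_k}\to\mu^*$ together with the full monotone limit $\cF(\mu_t)\to\cF(\mu^*)$. Replacing Theorem \ref{ThmConv} by Lemma \ref{LemCompactness} forces you to add a step upgrading subsequential to full $W_2$-convergence, which the paper does by taking an arbitrary subsequence, extracting a convergent sub-subsequence via the moment bound \eqref{EqBasinLBF}, and showing its limit must again be $\mu^-$.

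There is also a small but genuine gap in the final selection step: from $m(t)<0$ for all $t$ and $m(t_k)\to\mr[\mu^*]$ you can only conclude $\mr[\mu^*]\leq 0$, which rules out $\mu^+$ but \emph{not} $\mu^0$ (the mean may approach $0$ strictly from below). To exclude $\mu^*=\mu^0$ you must bring in the energy constraint: if $\mu^*=\mu^0$, then $\cF(\mu_t)\to\cF(\mu^0)$ while $\cF(\mu_t)\leq\cF(\mu)\leq\cF(\mu^0)$ and $t\mapsto\cF(\mu_t)$ is nonincreasing, so $\cF(\mu_t)\equiv\cF(\mu^0)$; by \eqref{EqWGFDefi1} the flow is then stationary, Lemma \ref{LemStationary} gives $\mu\in\{\mu^-,\mu^0,\mu^+\}$, and $\cF(\mu)=\cF(\mu^0)>\cF(\mu^\pm)$ forces $\mu=\mu^0$, contradicting $\mr[\mu]\neq 0$. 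This is the same contradiction mechanism you already set up for the mean-crossing argument; it simply needs to be run once more to close the subsequential case.
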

\begin{proof}
	The proof is postponed to Section \ref{SecBasinCompact}.
\end{proof}

The second by-product is the following proposition, which provides useful informations on the 
energy landscape determined by $ \cF $.  
\begin{proposition}\label{PropIntroBasin} 
	Suppose Assumption \ref{AssBasin}. 
	Let $ \mathcal{B}^- $, $ \mathcal{B}^0 $  and $ \mathcal{B}^- $ be the \emph{basins of attraction} of the stationary measures $ \mu^-,\mu^0 $ and $ \mu^+ $, respectively. That is,  
	\begin{align}\label{EqBasinDef}
		\begin{split}
			\mathcal{B}^-&~=~  \{ \mu \in \Pw ~|~  \lim_{t\ra \infty } W_2(S[\mu](t) ,  \mu^-)=0		\},\\
			\mathcal{B}^+&~=~  \{ \mu \in \Pw ~|~  \lim_{t\ra \infty }  W_2(S[\mu](t) ,  \mu^+)=0		\}, \qaq\\
			\mathcal{B}^0&~=~  \{ \mu \in \Pw ~|~  \lim_{t\ra \infty }  W_2(S[\mu](t) ,  \mu^0)=0		\}.
		\end{split}
	\end{align}
	Then, 
	$ \mathcal{B}^- $  and $ \mathcal{B}^+ $ are open subsets of the metric space\footnote{It is shown in  \cite[6.18]{vil} that $ (\Pw, W_2) $ is even a \emph{Polish space}, i.e.\ a complete separable metric space.}
	$ (\Pw, W_2) $, and  $ \mathcal{B}^0 $ is a closed subset of $ (\Pw, W_2) $.
\end{proposition}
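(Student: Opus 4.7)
By Theorem \ref{ThmConv} every orbit of the gradient flow converges to one of the three stationary measures, so $\Pw = \mathcal{B}^- \sqcup \mathcal{B}^0 \sqcup \mathcal{B}^+$ is a disjoint union; in particular, the closedness of $\mathcal{B}^0$ is equivalent to the openness of $\mathcal{B}^- \cup \mathcal{B}^+$. Since $\Psi$ is even (Assumption \ref{AssBasin}(3)), the pushforward by $x \mapsto -x$ is a $W_2$-isometry of $\Pw$ that commutes with the semigroup $S$ and interchanges $\mu^-$ with $\mu^+$, hence it maps $\mathcal{B}^-$ bijectively onto $\mathcal{B}^+$. It therefore suffices to prove that $\mathcal{B}^-$ is open.

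Fix $\mu \in \mathcal{B}^-$. By Theorem \ref{ThmConv} one has $W_2(S[\mu](t), \mu^-) \to 0$ and $\cF(S[\mu](t)) \to \cF(\mu^-)$ as $t \to \infty$. Since the map $\nu \mapsto m(\nu) := \int_\R z\,d\nu(z)$ is $W_2$-continuous with $m(\mu^-) < 0$, and since $\cF(\mu^-) < \cF(\mu^0)$ (the measures $\mu^\pm$ are the global minimizers of $\cF$ whereas $\mu^0$ is not, cf.\ Lemma \ref{LemRelationFH}), I may pick $t_0 > 0$ so large that
$$
m(S[\mu](t_0)) < 0 \qaq \cF(S[\mu](t_0)) < \cF(\mu^0).
$$
The plan is to transfer both strict inequalities to every initial datum $\nu$ in a sufficiently small $W_2$-ball around $\mu$: Proposition \ref{PropIntroSmallBasin} applied to $S[\nu](t_0)$ will then give $S[\nu](t_0) \in \mathcal{B}^-$, and the semigroup identity $S[\nu](t_0+s) = S[S[\nu](t_0)](s)$ upgrades this to $\nu \in \mathcal{B}^-$.

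Assumption \ref{AssBasin} ensures that $\cF$ is $\lambda$-geodesically convex on $(\Pw, W_2)$ for some $\lambda \in \R$: the entropy is $0$-convex (McCann's displacement convexity), $\int \Psi_c \,d\mu$ is $c$-convex, $\int \Psi_b \,d\mu$ is $(-c')$-convex since $|\Psi_b''| \leq c'$, and $-\tfrac{J}{2} m(\mu)^2$ is $(-|J|)$-convex along $W_2$-geodesics, because $m$ is affine along such geodesics and $|m(\mu_1)-m(\mu_0)| \leq W_2(\mu_0,\mu_1)$. The general Ambrosio-Gigli-Savar\'e theory (\cite{ambgigsav}) then yields, on the one hand, the Lipschitz estimate $W_2(S[\nu](t_0), S[\mu](t_0)) \leq e^{-\lambda t_0} W_2(\nu, \mu)$, which combined with $W_2$-continuity of $m$ forces $m(S[\nu](t_0)) < 0$ for $\nu$ close enough to $\mu$; and, on the other hand, the continuity of the map $\nu \mapsto \cF(S[\nu](t_0))$ on $(\Pw, W_2)$ for every fixed $t_0 > 0$, which yields $\cF(S[\nu](t_0)) < \cF(\mu^0)$ for $\nu$ close enough to $\mu$. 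Applying Proposition \ref{PropIntroSmallBasin} as described above then concludes that $\mathcal{B}^-$ is open.

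The main technical obstacle is the continuity of $\nu \mapsto \cF(S[\nu](t_0))$ for fixed $t_0 > 0$: the functional $\cF$ is only $W_2$-lower semicontinuous on $\Pw$ (and jumps to $+\infty$ on singular measures), so the Lipschitz estimate on $\nu \mapsto S[\nu](t_0)$ does not by itself deliver the required upper bound. The missing upper semicontinuity is recovered from the smoothing of the flow at positive times, via the evolution variational inequality and the energy identity $\cF(\mu) - \cF(S[\mu](t_0)) = \int_0^{t_0} |\partial \cF|^2(S[\mu](s))\,ds$ of the AGS framework; combining the EVI applied between an intermediate time $s < t_0$ and $t_0$ (with reference measure $\sigma = S[\mu](t_0)$) with the $W_2$-Lipschitz bound on $\nu \mapsto S[\nu](s)$ and sending $s \uparrow t_0$ produces the desired $\limsup_{\nu \to \mu} \cF(S[\nu](t_0)) \leq \cF(S[\mu](t_0))$, which together with lower semicontinuity closes the continuity argument.
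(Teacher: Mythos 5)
Your argument follows the paper's strategy quite closely: reduce to $\mathcal{B}^-$ by the reflection symmetry of Lemma \ref{LemBSymmetry}, flow an arbitrary $\mu\in\mathcal{B}^-$ for a fixed finite time $t_0$ into the region $\{\mr[\cdot]<0\}\cap\{\cF<\cF(\mu^0)\}$, then show those two strict inequalities are stable under a $W_2$-perturbation of the initial datum (contraction estimate for the mean, regularization-type bound for $\cF$), and close via Proposition \ref{PropIntroSmallBasin} and the semigroup property; finally $\mathcal{B}^0=\Pw\setminus(\mathcal{B}^-\cup\mathcal{B}^+)$ gives closedness. Two small remarks on the differences. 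First, where you establish the $\cF$-bound by arguing that $\nu\mapsto\cF(S[\nu](t_0))$ is continuous (combining lower semicontinuity with an EVI-based upper semicontinuity argument), the paper instead directly applies the packaged regularization estimate \eqref{EqRegu} to $S[\mu](2t')$ with reference measure $S[\nu](t')$, which gives the needed quantitative inequality $\cF(S[\mu](2t'))\leq\cF(S[\nu](t'))+|\lambda|\,W_2(S[\nu](t'),S[\mu](t'))^2$ in one line; your EVI derivation is essentially re-proving that estimate, so the two are equivalent in substance but the paper's route is shorter. Second, you invoke Theorem \ref{ThmConv} both for the decomposition $\Pw=\mathcal{B}^-\cup\mathcal{B}^0\cup\mathcal{B}^+$ and for $\cF(S[\mu](t))\to\cF(\mu^-)$; in the paper Theorem \ref{ThmConv} is proved \emph{after} the openness of $\mathcal{B}^\pm$ (and indeed uses it in Cases 3.2 and 3.3), so a literal citation would be circular. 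This is not a genuine gap, because the fragment of Theorem \ref{ThmConv} you need for $\mu\in\mathcal{B}^-$ rests only on Lemma \ref{LemCompactness} (after regularizing into $D(\cF)$), which is what the paper cites at that point, and the complementarity statement is correctly deferred to the corollary on $\mathcal{B}^0$; but to match the paper's logical ordering you should cite Lemma \ref{LemCompactness} rather than Theorem \ref{ThmConv} in the openness step.
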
 
\begin{proof}
	The claim follows from Proposition \ref{PropBasin} and Corollary \ref{CorB0} below.
\end{proof}

The  results of this paper are not completely new. 
Indeed, 
Theorem \ref{ThmConv} and Proposition \ref{PropIntroSmallBasin} have already been obtained in the paper \cite{Tugaut13}\footnote{Note that the interaction term in \cite{Tugaut13} is of  polynomial form, whereas in the present paper we restrict  to the linear interaction from the system \eqref{EqSDEBasin}. Hence, the setting in \cite{Tugaut13} is   more general than here.}.
The proofs in \cite{Tugaut13} are based on methods from the theory of partial differential equations. 
The main contributions of this paper are that we use the Wasserstein framework to prove these results (which provides   shorter proofs than in \cite{Tugaut13}), and that the results hold in the stronger topology of the Wasserstein distance (whereas the results in \cite{Tugaut13} are formulated in terms of the weak topology).
However, to our knowledge, Proposition \ref{PropIntroBasin} is a new result.
It is expected that this proposition will become useful  in the study of the metastable behaviour of the system \eqref{EqIntroMicrSystem} via the Wasserstein framework. The latter is left for  future research.

This paper is organized as follows.  
First, we    
recall some elements of the construction of \emph{Wasserstein gradient flows} in Section \ref{SubsecWassSpace}.
Then,   in Section \ref{SubsecHamil}, we
compare $ \cF $ with the functional  $\bar{H}$, which appeared in \cite{BaMe2019}.
%
In Section \ref{SubsecStat} we characterize the stationary measures, and in Section \ref{SubsecSym} we show a useful symmetry property of the McKean-Vlasov evolution.
In Chapter \ref{SecBasinCompact} we first show  some compactness property of the gradient flows for $ \cF $, and then use this property to  prove Proposition \ref{PropIntroSmallBasin}.
In Chapter \ref{SecBasinOpen} we prove the main part of Proposition \ref{PropIntroBasin}.
In Chapter \ref{SecProofThmConv} we   provide the proof of Theorem \ref{ThmConv} and state some immediate consequences of this theorem for the set $ \mathcal{B}^0 $.
Finally, 
In Section \ref{SecAssu} we briefly discuss  the assumptions we make in this paper.

\section{Preliminaries}\label{SecBasinPrelim}
\subsection{Wasserstein gradient flows}\label{SubsecWassSpace}
In this section, we briefly
recall some elements of the construction of \emph{Wasserstein gradient flows}.
For simplicity, we  restrict all definitions to the functional $ \cF $ from \eqref{EqDefiF}.
For more general functionals and for the details, we refer to \cite{ambgigsav}.

Let $ \cP_2(\R) $ denote the space of all probability measures  on $ \R $, whose second moment is finite. We equip $ \cP_2(\R) $ with the \emph{Wasserstein distance} $ W_2 $, which, for $ \mu, \nu \in \Pw $ is defined by
\begin{align}\label{EqDefiWassDist}
	W_2(\mu,\nu)^2 := \inf_{\gamma \in \CouplW{\mu,\nu} }  \int_{\R^2} |y-y'|^2 \,  d\gamma(y,y') ,
\end{align}
where   $ \CouplW{\mu,\nu} $ denotes the space of all probability measures on $ \R^2 $ that have $ \mu  $ and $ \nu $ as marginals.

Let $(\mu_t)_{t\in[0,\infty)}  $ be a curve of probability measures such that $\mu_t \in \Pw $ for all $t\in[0,\infty)$. Then we say that $(\mu_t)_{t\in[0,\infty)} $ is \emph{absolutely continuous} if  there exists   $ m \in \Lrm^2_{\mathrm{loc}} ((0,\infty)) $ such that 
\begin{align}
	W_2(\mu_s,\mu_t) ~\leq ~
	\int_{s}^{t} m(r) \, dr \qfa 0<s<t<\infty.
\end{align}
We denote   the set of all absolutely continuous curves in $ (\Pw,W_2) $ by $ \ACLio $.
It is shown in \cite[1.1.2]{ambgigsav} that for all $ (\mu_t)_{t\in[0,\infty)} \in \ACLio $,  there exists  $ |\mu'| \in \Lrm^2_{\mathrm{loc}}((0,\infty)) $, called the \emph{metric derivative} of $ (\mu_t)_{t\in[0,\infty)} $, such that 
\begin{align}\label{EqIntroMetricDer}
	|\mu'|(t) ~=~ \lim_{s\ra t} \frac{W_2(\mu_s, \mu_t)}{|s-t|} \ \ \ \ \text{ for almost every } t\in (0,\infty).
\end{align}

Another important object   is the   \emph{metric slope} (cf.\ \cite[1.2.4]{ambgigsav}) of $ \cF $, which  is defined by 
\begin{align}\label{EqIntroMetricSlope}
	|\partial \cF| ( \mu ) ~= ~ \limsup_{\nu \ra \mu}\left( \frac{\cF(\mu ) -\cF(\nu )}{W_2(\mu,\nu)}   \right)^+ \qf \mu \in D(\cF),
\end{align}
and $ |\partial \cF| ( \mu )  = \infty  $ for $ \mu \in \Pw \setminus D(\cF) $.

We are now in the position to define the notion of \emph{Wasserstein gradient flows for} $ \cF $.
There are several different and equivalent ways to do this; some of them are listed in \cite[Chapter 11]{ambgigsav}. 
In this paper, we choose the definition as  a \emph{curve of maximal slope} (cf.\ \cite[1.3.2]{ambgigsav}).
\begin{definition}
	We say that  a curve $ (S[\mu] (t))_{t\in[0,\infty)} \in \ACLio $  is a \emph{(Wasserstein) gradient flow for} $ \cF $ with initial value $ \mu \in \Pw $ if $ \lim_{t\downarrow 0} W_2(S[\mu] (t), \mu)\,=\,0 $, and if the map $ t \mapsto \cF(S[\mu] (t)) $ is locally absolutely continuous in $ (0, \infty) $ with 
	\begin{align}\label{EqMonotonicity}
		\frac{d}{dt} \cF(S[\mu](t)) = -  |\partial \cF|^2(S[\mu](t))  = - \left|(S[\mu])'\right|^2(t) \quad  \text{for almost every }t\in (0,\infty).
	\end{align}
\end{definition}  

We conclude this section with some useful properties of $ \cF $ and Wasserstein gradient flows for $ \cF $,  which we use many times in this paper.
\begin{lemma}\label{LemProperties}
	Suppose Assumption \ref{AssBasin}. Then the following statements are true.
	\begin{enumerate}[(i)] 
		\item \emph{(Lower bound on $ \cF $)}
		There exists $ c>0  $ such that 
		\begin{align}\label{EqBasinLBF}
			\cF(\mu) ~\geq ~ c\left( \int_{\R} |x|^{2 + \epsilon} \, d\mu(x) -1\right)  \qfa \mu \in \Pw.
		\end{align} 
		\item \emph{($ \lambda $-convexity of $ \cF $)} There exists $ \lambda < 0 $ such that $ \cF $ is \emph{$ \lambda $-convex along generalized geodesics} in the sense of \cite[4.0.1]{ambgigsav}.
		\item \emph{(Existence)} For each $ \mu \in \Pw $, there exists a  gradient flow $ (S[\mu] (t))_{t\in[0,\infty)} $ for $ \cF $. 
		\item \emph{(Energy identity)}
		Let $ \mu \in D(\cF) $. Then,  for all $ t\in (0,\infty) $, 
		\begin{align}\label{EqWGFDefi1}
			\begin{split}
				0~=~
				\cF(S[\mu](t)) - 		\cF(\mu) + \frac 12 \int_0^t\big(|\partial \cF|^2(S[\mu](r))  + |(S[\mu])'|^2(r)\big)\,dr .
			\end{split}
		\end{align}
		\item \emph{(Regularization estimate)} 
		Let $ \mu \in \Pw $. Then, 
		\begin{align}\label{EqRegu}
			\cF(S[\mu] (t))   ~\leq~
			\cF(\nu ) 
			+ 
			\frac{\lambda}{2(\eee^{\lambda t} - 1)}\, 
			W_2(\mu,\nu)^2  \qfa \nu \in D(\cF) \text{ and } t\in (0,\infty).
		\end{align} 		
		\item \emph{(contraction and semigroup property)} 
		Let  $ \mu, \nu \in \Pw $.  Then, 
		\begin{align}\label{EqContraction}
			W_2(S[\mu] (t), S[\nu] (t)) ~\leq~ \eee^{-\lambda t} \, W_2(\mu, \nu) \qfa  t\in (0,\infty).
		\end{align}
		In particular, the semigroup property $ S[S[\mu](h)](t) = S[\mu](t+h) $ holds for all $ t,h>0 $. 
	\end{enumerate}
\end{lemma}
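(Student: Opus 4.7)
The plan is to reduce parts (iii)--(vi) to the general theory of Wasserstein gradient flows for $\lambda$-convex functionals developed in \cite[Chapters 2, 4 and 11]{ambgigsav}, so the bulk of the proof consists in establishing the coercivity (i) and the $\lambda$-convexity (ii) for the specific functional $\cF$.

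For the lower bound in (i) I would split the potential as $\Psi = \tfrac12\Psi + \tfrac12\Psi$ and invoke Assumption \ref{AssBasin}(2) twice. The first half is used to control the entropy: by the Gibbs variational principle (equivalently, Jensen's inequality applied to the probability measure proportional to $\eee^{-\alpha\Psi}$) one obtains $\int \log\rho\, d\mu + \alpha\int \Psi\, d\mu \geq -\log Z_\alpha$ for a suitable constant $Z_\alpha$ and a small $\alpha>0$. The second half dominates the interaction term after Jensen's inequality $(\int z\, d\mu)^2 \leq \int z^2\, d\mu$, since Assumption \ref{AssBasin}(2) provides $|z|^{2+\epsilon}$-growth, which absorbs $\tfrac{|J|}{2}\int z^2 d\mu$ through Young's inequality. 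What remains is a multiple of $\int |z|^{2+\epsilon}\, d\mu$ minus a constant, as claimed.

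For (ii) I would decompose $\cF = \mathcal{H} + \mathcal{V} + \mathcal{I}$ into the entropy, potential and interaction contributions and analyse each term along a generalized geodesic $(\mu_t)_{t\in [0,1]}$. The entropy $\mathcal{H}$ is $0$-convex along generalized geodesics by McCann's theorem (see \cite[Chapter 9]{ambgigsav}). The potential part $\mathcal{V}$ is $(c-c')$-convex, since $\Psi_c'' \geq c$ and $|\Psi_b''|\leq c'$ yield $\Psi'' \geq c - c'$, and the standard criterion for potential energies converts this into the corresponding geodesic convexity constant. For the interaction $\mathcal{I}(\mu) = -\tfrac{J}{2}(\int z\, d\mu)^2$, I would use the fact that the first moment $m(\mu_t)$ is affine in $t$ along any generalized geodesic (being the integral of a coordinate against the push-forward of a linear interpolation in $\R^2$), so $t\mapsto \mathcal{I}(\mu_t)$ is a scalar quadratic whose second derivative equals $-J(m(\mu_1)-m(\mu_0))^2$; combined with $|m(\mu_1)-m(\mu_0)|\leq W_2(\mu_0,\mu_1)$ this yields $-|J|$-convexity. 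Summing gives $\lambda \leq (c-c') - |J|$, which can certainly be negative, consistent with the statement.

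The main obstacle I anticipate is verifying the convexity along \emph{generalized} geodesics (rather than mere displacement geodesics) for all three pieces simultaneously, because the sum-of-functionals framework of \cite{ambgigsav} requires the stronger notion. Once (i) and (ii) are secured, the remainder is essentially bookkeeping: (iii) follows from \cite[Theorem 11.2.8]{ambgigsav}; (iv) is a direct consequence of the curve-of-maximal-slope definition, together with the fact that $\lambda$-convexity upgrades the chain-rule inequality to an equality (cf.\ \cite[Chapter 2]{ambgigsav}), so that the Young inequality implicit in \eqref{EqMonotonicity} is saturated and \eqref{EqWGFDefi1} follows by integration; (v) is the regularization estimate \cite[Theorem 4.0.4]{ambgigsav} transcribed to $\cF$; and (vi) is the standard $\lambda$-contraction of gradient flows of $\lambda$-convex functionals \cite[Theorem 11.2.1]{ambgigsav}, with the semigroup identity then automatic from uniqueness.
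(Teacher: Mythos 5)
Your proposal is correct and follows essentially the same route as the paper: part (i) is proven by the same splitting of $\Psi$, a Gibbs/Jensen bound for the entropy term, and Young inequalities to absorb the interaction against the $|z|^{2+\epsilon}$-growth, while parts (ii)--(vi) are outsourced to the general theory in \cite{ambgigsav} (the paper cites \cite[Section 9.3]{ambgigsav} or \cite[3.35]{BaMe2019} for (ii), and \cite[11.1.3, 11.2.8, 2.3.3, 4.0.4, 4.3.2, (11.2.2)]{ambgigsav} for (iii)--(vi), where you merely supply a sketch of the additivity argument that those references encapsulate).
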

\begin{proof}
	Part \emph{(ii)} is proven in \cite[Section 9.3]{ambgigsav} or \cite[3.35]{BaMe2019}, part \emph{(iii)}  in \cite[11.1.3 and 11.2.8]{ambgigsav}, part \emph{(iv)}  in \cite[2.3.3 and 4.0.4]{ambgigsav}, part \emph{(v)}  in \cite[4.3.2]{ambgigsav}\footnote{Note that there is a typo in  \cite[(4.3.2)]{ambgigsav}: It must be $ \frac{\eee^{\lambda T} - 1 }{\lambda} $ instead of $ \frac{\eee^{\lambda T} - 1 }{T} $.}  and part \emph{(vi)} is proven in \cite[(11.2.2)]{ambgigsav}.
	
	It remains to show part \emph{(i)}. 
	Let $ \mu \in D(\cF) $, since otherwise the claim is trivial. In the following let $ C>0 $ denote a constant which does not depend on $ \mu $, and may change from line to line.
	We proceed as in the proof of \cite[3.34]{BaMe2019} and use Assumption \ref{AssBasin} (2) to observe that 
	\begin{align}\label{LBFEqq}
		\begin{split}
			\cF&(\mu) \geq -C  + \frac{1}{4} \int_{  \R^2	}  \Psi(x)  \, d\mu(x) + \frac{1}{2} \int_{  \R^2	} \left(\frac{1}{4} \big(\Psi(x) +  \Psi(\bar{x})\big) - J x \bar{x}			\right) d\mu(x) d\mu(\bar{x})\\ 
			&\geq -C  + \frac{c''}{4} \int_{\R} |x|^{2 + \epsilon} \, d\mu(x)+ \frac{1}{2} \int_{  \R^2	} \left(\frac{c''}{4} \big(|x|^{2+\epsilon} +  |\bar{x}|^{2+\epsilon}\big)- J x \bar{x}			\right) d\mu(x) d\mu(\bar{x}) .
		\end{split}
	\end{align}
	Note that, as a  consequence of the classic Young inequalities, for all $ x, \bar{x} \in \R $ and all $ \alpha>0 $, $ |x \bar{x}| \leq |x|^2 /2 + |\bar{x}|^2/2 $ and $ |x|^{2 + \epsilon} \geq \alpha |x|^{2} -C_\alpha $ for some constant $ C_\alpha >0$ (which only depends  on $ \alpha $ and $ \epsilon $).
	Then, by choosing $ \alpha  $ large enough, we can show that the last term on the right-hand side of  \eqref{LBFEqq} is greater or equal to $ -C_\alpha \frac{c''}{4}$. This concludes the proof. 
\end{proof}

\subsection{Macroscopic Hamiltonians}\label{SubsecHamil}
In this section we first introduce and recall some facts about the   function $ \bar{H}:\R\ra\R $, which was the object of investigation  in the paper \cite{BaMe2019}. 
Then, in Lemma \ref{LemRelationFH}, we show the relation between   $ \cF $ and $ \bar{H} $, and infer from that useful analytic facts about  $ \cF $.

Let the   function $ \varphi^*:\R\ra\R $ be defined by 
\begin{align}\label{EqEpsMuBarStar}
	\varphi^*(\sigma)  ~=~   \log \int_{\R} \eee^{ \sigma z-     \Psi(z) }\,  dz \qf \sigma \in \R.
\end{align}
Let $ \varphi:\R\ra\R $ be the \emph{Legendre transform} of $ \varphi^* $, i.e., 
\begin{align}\label{EqEpsMuBarCramer}
	\varphi(m)~=~ \sup_{\sigma \in \mathbb{R}} \left(\sigma m - \varphi^*(\sigma) \right) \qf m \in \R.
\end{align}
It is then  well-known from standard properties of Legendre transforms (see for instance \cite[III.2.5]{PatThesis}, \cite[A.1.1]{BaMe2019} or \cite[Lemma 41]{GORV}) that for all $ m, \sigma  \in \R $, 
\begin{align}\label{EqLegCramer}
	\varphi'(m)m-\varphi^*(\varphi'(m)) = \varphi(m), \qquad  (\varphi^*)'(\varphi'(m)) = m \qaq (\varphi^*)'(\sigma)= \int_{\R} z \, d\mu^{\sigma}, 
\end{align}
where, 
for $ \sigma \in \R $,   the probability measure $ \mu^{ \sigma} \in \Pw $ is defined  by 
\begin{align} \label{EqEpsGCMeasure}
	d\mu^{ \sigma}(z)  ~=~ \eee^{-\varphi^*(\sigma) +\sigma z-  \Psi(z)} \, dz~=~ 	\frac{ \eee^{ \sigma z-  \Psi(z)} }{ \int_{\R} \   \eee^{\sigma \bar{z}- \Psi(\bar{z})} \, d\bar{z}}\, dz.
\end{align} 
Finally,  we define the function $ \bar{H}:\R\ra\R $ by
\begin{align}\label{EqEpsHBar}
	\bar{H}(z) ~=~ \varphi(z) ~-~   \frac{J}{2}z^2 \qf z\in \R.
\end{align}
\begin{remark}\label{RemarkBarH}
	The function $ \bar{H} $ played the role of the \emph{macroscopic Hamiltonian} in \cite{BaMe2019}, where  the metastable behaviour of the system \eqref{EqIntroMicrSystem} was studied.
	It is important to notice that in \cite{BaMe2019} the \emph{empirical mean} was chosen to be the \emph{macroscopic order parameter}.
	Recall from fact (i) and (ii) of the introduction that the functional $ \cF $ appears as the macroscopic Hamiltonian of the system \eqref{EqIntroMicrSystem} by choosing the \emph{empirical distribution} as the macroscopic order parameter; see   \cite[Section 1.4]{BaMe2019} for more details on this. 
	
	Moreover, as it is shown in   \cite[3.4]{BaMe2019}, under Assumption \ref{AssBasin}, the function $\bar{H} $ admits exactly three critical points, which are located at~$- m^\star  $,~$0$ and~$ m^\star  $ for some  $ m^\star>  0$.
	Furthermore,   $ \bar{H}''(0) < 0 $, $ \bar{H}''(m^\star  ) = \bar{H}''(- m^\star  ) >0 $, and $ \bar{H}(0) > \bar{H} (m^\star  ) = \bar{H} (- m^\star  ) $.
	That is,  $\bar{H} $ has a local maximum at~$0$, and   the two global minima of $\bar{H} $ are located  at~$\pm m^\star  $.
\end{remark}

In the following let $ \mr[\mu] = \int_\R zd\mu(z)$ denote the mean of a probability measure $ \mu \in \Pw $.
%
We have the following relation\footnote{See also \cite[Section IV.2]{PatThesis} for a  more general result.} between the macroscopic Hamiltonians $ \cF $ and $\bar{H}$.
\begin{lemma} \label{LemRelationFH}
	Suppose Assumption \ref{AssBasin}.
	Then, for all $ m \in \R $, we have that 
	\begin{align}\label{EqRelationFH1}
		\cF(\mu)
		~>~ \cF(\mu^{\varphi'(m)}) \qfa \mu \in 
		\Pw  \text{ such that } \mr[\mu] = m \text{ and } \mu \neq \mu^{\varphi'(m)},
	\end{align}
	and, 
	\begin{align}\label{EqRelationFH}
		\bar{H} (m) ~=~
		\min_{\mu \in 
			\Pw , \mr[\mu] = m} \cF(\mu)
		~=~ \cF(\mu^{\varphi'(m)}).
	\end{align} 
	Moreover, let 
	\begin{align}\label{EqBasinStationaryMeasures}
		\mu^- := \mu^{\varphi'(-m^\star)} , \qquad  \mu^0 := \mu^{\varphi'(0)} \qaq 
		\mu^+ := \mu^{\varphi'(m^\star)} .
	\end{align} 
	Then, $ \cF $ admits exactly two global minima, one at $ \mu^-    $ and one at  $ \mu^+    $, and we have that 
	$ \cF(\mu^- ) =  \cF(\mu^+ ) < \cF(\mu^0 )  $.
\end{lemma}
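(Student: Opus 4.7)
The plan is to parametrize $\Pw$ by the mean and minimize $\cF$ on each slice $\{\mu \in \Pw : \mr[\mu] = m\}$ before minimizing over $m \in \R$. The key observation is that the interaction term $-\frac{J}{2}(\mr[\mu])^2$ is \emph{constant} on each slice, so the constrained problem reduces to minimizing the relative-entropy-type functional $\mu \mapsto \int \log\rho\, d\mu + \int \Psi\, d\mu$ under the linear constraint $\int z\, d\mu(z) = m$. The unique minimizer of this is classically the exponential tilt, which is exactly $\mu^{\varphi'(m)}$ by the third identity in \eqref{EqLegCramer}.

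First I would compute $\cF(\mu^\sigma)$ in closed form. Using $\log \rho^\sigma(z) = -\varphi^*(\sigma) + \sigma z - \Psi(z)$ together with $(\varphi^*)'(\sigma) = \int z\, d\mu^\sigma$, the $\Psi$-integrals cancel and I obtain
\[
\cF(\mu^\sigma) \;=\; \sigma (\varphi^*)'(\sigma) - \varphi^*(\sigma) - \tfrac{J}{2}\big((\varphi^*)'(\sigma)\big)^2.
\]
Specializing $\sigma = \varphi'(m)$ and invoking the Legendre identities $(\varphi^*)'(\varphi'(m)) = m$ and $\varphi'(m)\,m - \varphi^*(\varphi'(m)) = \varphi(m)$ collapses this to $\varphi(m) - \frac{J}{2} m^2 = \bar{H}(m)$, which already gives the equality $\cF(\mu^{\varphi'(m)}) = \bar{H}(m)$ in \eqref{EqRelationFH}.

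For the strict inequality \eqref{EqRelationFH1}, fix $m \in \R$, set $\sigma = \varphi'(m)$, and take $\mu \in \Pw$ with $\mr[\mu] = m$. If $\cF(\mu) = \infty$ the claim is trivial, so I may assume $\mu$ has density $\rho$. Since $\mr[\mu] = \mr[\mu^\sigma] = m$, the quadratic terms in $\cF(\mu) - \cF(\mu^\sigma)$ cancel; writing $\log \rho = \log(\rho/\rho^\sigma) + \log \rho^\sigma$ and substituting the explicit expression for $\log \rho^\sigma$, the $\Psi$-integrals together with the affine pieces $-\varphi^*(\sigma) + \sigma z$ telescope (exactly because $\mu$ and $\mu^\sigma$ share the same mean). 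What remains is precisely the relative entropy
\[
\cF(\mu) - \cF(\mu^{\varphi'(m)}) \;=\; \int_\R \log\!\left(\frac{\rho}{\rho^\sigma}\right) d\mu,
\]
which is nonnegative by Jensen's inequality and vanishes only when $\mu = \mu^\sigma$, as claimed.

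The last statement then follows immediately: $\inf_{\mu \in \Pw} \cF(\mu) = \inf_{m \in \R} \bar{H}(m)$, and by Remark \ref{RemarkBarH} the right-hand side is attained exactly at $m = \pm m^\star$ with $\bar{H}(\pm m^\star) < \bar{H}(0)$. Combined with the strict slice-minimality just proved, the global minimizers of $\cF$ are precisely $\mu^- = \mu^{\varphi'(-m^\star)}$ and $\mu^+ = \mu^{\varphi'(m^\star)}$, while $\cF(\mu^0) = \bar{H}(0)$ lies strictly above. The only slightly delicate point is verifying that each integral in the telescoping rewrite is well defined; this follows from $\cF(\mu) < \infty$ combined with the lower bound in Lemma \ref{LemProperties}\,(i), which gives $\int \Psi\, d\mu < \infty$ and hence finiteness of $\int \log \rho^\sigma\, d\mu$.
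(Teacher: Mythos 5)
Your proof is essentially the same as the paper's: both establish the slicewise decomposition $\cF(\mu) = \cH(\mu\,|\,\mu^{\varphi'(m)}) + \bar{H}(m)$ for $\mr[\mu]=m$, invoke strict positivity of relative entropy for \eqref{EqRelationFH1}--\eqref{EqRelationFH}, and conclude the statement on global minima from Remark~\ref{RemarkBarH}. One small point to tighten: in your closing remark, Lemma~\ref{LemProperties}(i) gives $\int |x|^{2+\epsilon}\,d\mu < \infty$ but does \emph{not} in general yield $\int \Psi\,d\mu < \infty$ (Assumption~\ref{AssBasin} only bounds $\Psi$ below by $|z|^{2+\epsilon}$, so $\Psi$ may grow faster); the paper avoids this by working directly with the combination $\log(\rho\,\eee^{\Psi})$, so that the only quantity one needs to be finite is $\int (-\varphi^*(\sigma)+\sigma z)\,d\mu = -\varphi^*(\sigma)+\sigma m$, which is automatic for $\mu\in\Pw$.
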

\begin{proof} 
	If $ \cF(\mu) = \infty $, then \eqref{EqRelationFH1} is trivially satisfied. So we assume that $ \cF(\mu) < \infty $.
	In the following let $ \cH(\cdot|\cdot) $ denote  the \emph{relative entropy functional} (see e.g.\ \cite[9.4.1]{ambgigsav}), and let $ \mr[\mu] = m $.
	Then, by using \eqref{EqLegCramer} and by denoting the Lebesgue density of $ \mu $  by $ \rho $,
	\begin{align}\label{EqRelationFH11}
		\begin{split}
			\cF(\mu ) &~=~
			\int_\R
			\log(\rho \, \eee^\Psi) d\mu
			- \frac{J}{2} m^2
			~=~ \cH(\mu \, | \, \mu^{\varphi'(m)}) +
			\varphi'(m)m-\varphi^*(\varphi'(m)) - \frac{J}{2} m^2 \\
			&~=~\cH(\mu \, | \, \mu^{\varphi'(m)}) +  \bar{H} (m) . 
		\end{split}
	\end{align}
	Since $ \cH(\mu^{\varphi'(m)} \, | \, \mu^{\varphi'(m)}) = 0 $ and $ \cH(\mu \, | \, \mu^{\varphi'(m)}) > 0 $ if $  \mu \neq \mu^{\varphi'(m)} $,
	\eqref{EqRelationFH11} implies that 
	\begin{align}\label{EqRelationFH111}
		\begin{split}
			\cF(\mu )  > \bar{H} (m) \text{ if } \mu \neq \mu^{\varphi'(m)} \qaq \cF(\mu^{\varphi'(m)} )  ~=~ \bar{H} (m).  
		\end{split}
	\end{align} 
	From \eqref{EqRelationFH111} we immediately infer \eqref{EqRelationFH1} and \eqref{EqRelationFH}. 
	Finally, \eqref{EqRelationFH} and  Remark \ref{RemarkBarH} imply the last two claims.
\end{proof}

\subsection{Stationary points of the McKean-Vlasov evolution}\label{SubsecStat}

In this section we characterize the \emph{stationary points} of the McKean-Vlasov evolution\footnote{See also \cite{HerrTug} for similar results.}, where we say that $ \mu \in \Pw $ is \emph{stationary} if 
\begin{align}\label{EqStationary}
	S[ \mu](t) =   \mu  	 \qfa t \in (0,\infty),
\end{align}
or equivalently, 
\begin{align}\label{EqStationary0}
	|(S[\mu])'| (t) ~=~0 \qf \text{almost every }   t\in (0,\infty).
\end{align}

\begin{lemma}\label{LemStationary}
	Suppose Assumption \ref{AssBasin}.
	Let $ \mu \in \Pw $.
	Then, the following statements are equivalent. 
	\begin{enumerate}[(i)]
		\setlength\itemsep{-0.1em} 
		\item $ \mu $ is stationary. 
		\item $ |\partial \cF|(\mu) = 0$. 
		\item  $ \mu \in \{\mu^-,\mu^0, \mu^+\} $. 
	\end{enumerate}
\end{lemma}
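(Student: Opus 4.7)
The plan is to prove the chain $(i) \Leftrightarrow (ii) \Rightarrow (iii) \Rightarrow (i)$, using the energy identity \eqref{EqWGFDefi1} for the two equivalences involving stationarity, and leveraging Lemma \ref{LemRelationFH} together with the explicit form of the Wasserstein slope for $\cF$ to handle the hardest step, $(ii) \Rightarrow (iii)$.

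For $(i) \Leftrightarrow (ii)$, I would argue as follows. If $\mu$ is stationary, then $|(S[\mu])'|(t)=0$ for a.e.\ $t$; inserting this in \eqref{EqMonotonicity} gives $|\partial \cF|(S[\mu](t))=0$ for a.e.\ $t$, and since $S[\mu](t)=\mu$ for all $t$ this yields $|\partial \cF|(\mu)=0$. Conversely, if $|\partial \cF|(\mu)=0$, consider the constant curve $c(t)\equiv \mu$: it is absolutely continuous with zero metric derivative, satisfies $t\mapsto \cF(c(t))=\cF(\mu)$ trivially, and thus obeys \eqref{EqMonotonicity}. Hence $c$ is a curve of maximal slope for $\cF$ with initial datum $\mu$; by the uniqueness of the Wasserstein gradient flow (granted by Lemma \ref{LemProperties}(ii) via the $\lambda$-convexity of $\cF$) we conclude $S[\mu](t)=\mu$ for all $t>0$.

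For $(iii) \Rightarrow (ii)$, I would directly compute the slope on a measure of the form $\mu^\sigma$. Standard results (see e.g.\ \cite[Thm.\ 10.4.13]{ambgigsav}) give, for measures $\mu = \rho\,dx$ with sufficient regularity, the representation
\begin{align*}
|\partial \cF|^2(\mu) ~=~ \int_{\R} \Big| \big(\log \rho + \Psi - J\, \mr[\mu]\, z\big)' \Big|^2 d\mu(z).
\end{align*}
Evaluating this at $\mu=\mu^\sigma$ with $\sigma=\varphi'(m)$ for $m\in\{-m^\star,0,m^\star\}$, the density is $\rho(z)=\exp(-\varphi^*(\sigma)+\sigma z-\Psi(z))$, so the integrand reduces to $(\sigma-Jm)^2$; and by the very definition of $m^\star$ (cf.\ Remark \ref{RemarkBarH}) one has $\varphi'(m)-Jm=\bar H'(m)=0$, so $|\partial \cF|(\mu^\sigma)=0$.

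For the remaining and main step $(ii)\Rightarrow(iii)$, I would use the same slope formula in reverse. If $|\partial \cF|(\mu)=0$, then $\mu\in D(\cF)$, and the regularity theory for $\lambda$-convex functionals on Wasserstein space ensures that $\mu$ admits a density $\rho$ regular enough for the slope formula to be valid. The identity $|\partial \cF|^2(\mu)=0$ forces $(\log \rho + \Psi - J\,\mr[\mu]\,z)'=0$ $\mu$-a.e., hence $\log\rho = -\Psi + J\,\mr[\mu]\,z + C$, which means $\mu = \mu^{J\mr[\mu]}$. Writing $m=\mr[\mu]$ and applying \eqref{EqLegCramer}, we get $m = \mr[\mu^{J m}] = (\varphi^*)'(Jm)$. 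Since $(\varphi^*)'(\varphi'(m))=m$ and $\varphi^*$ is strictly convex (so $(\varphi^*)'$ is injective), this yields $\varphi'(m)=Jm$, i.e.\ $\bar H'(m)=0$; Remark \ref{RemarkBarH} then restricts $m$ to $\{-m^\star,0,m^\star\}$, and therefore $\mu\in\{\mu^-,\mu^0,\mu^+\}$.

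The main obstacle is the rigorous use of the slope formula in $(ii)\Rightarrow(iii)$: the identity above is classical for smooth positive densities, but it must be invoked for an a priori arbitrary $\mu\in D(\cF)$. The cleanest route is to appeal to the subdifferential characterization for $\lambda$-convex functionals in \cite[Chs.\ 10--11]{ambgigsav}, which states that $|\partial\cF|(\mu)<\infty$ forces $\rho\in W^{1,1}_{\mathrm{loc}}(\R)$ with $\rho'/\rho + \Psi' - J\mr[\mu]\in L^2(\mu)$ and equal (in the $L^2(\mu)$ sense) to the Wasserstein gradient; the vanishing of its $L^2(\mu)$-norm then yields the ODE $\rho'/\rho = J\mr[\mu] - \Psi'$ in the a.e.\ sense, from which the explicit form of $\mu$ follows as above.
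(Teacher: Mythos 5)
Your proof is correct and follows the same overall architecture as the paper's: the crux, $(ii)\Rightarrow(iii)$, uses the Wasserstein-slope formula from \cite[10.4.13]{ambgigsav}, the resulting explicit density, the Legendre-duality identities \eqref{EqLegCramer}, and Remark~\ref{RemarkBarH} on the critical points of $\bar H$, exactly as in the paper; and $(iii)\Rightarrow(ii)$ is the same direct computation of the slope at $\mu^{\varphi'(m)}$. Where you genuinely deviate is in $(ii)\Rightarrow(i)$: you observe that the constant curve $t\mapsto\mu$ is itself a curve of maximal slope whenever $|\partial\cF|(\mu)=0$ (with $\mu\in D(\cF)$ forced by $|\partial\cF|(\mu)<\infty$), and then invoke uniqueness of Wasserstein gradient flows. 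The paper instead uses the slope-decay estimate $|\partial\cF|(S[\mu](t))\leq\eee^{-\lambda t}|\partial\cF|(\mu)$ from \cite[2.4.15--2.4.16]{ambgigsav} to propagate the vanishing of the slope forward in time. Your route is slightly more self-contained, relying only on the uniqueness the paper already quotes in its introduction; the paper's is a shade more robust since it does not require verifying the curve-of-maximal-slope axioms for a candidate curve. Similarly, in $(i)\Rightarrow(ii)$ you evaluate the slope directly along the constant flow, bypassing the lower semi-continuity argument the paper uses near $t\downarrow 0$ (this is harmless since the curve is literally constant). One small attribution slip: you credit uniqueness of the gradient flow to Lemma~\ref{LemProperties}(ii), but that item only asserts $\lambda$-convexity; the uniqueness is its consequence, recorded in the paper via \cite[11.2.8]{ambgigsav} in introduction fact~(i). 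Mathematically, the fact you need is available and the argument goes through.
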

\begin{proof}
	$ (i) \Rightarrow (ii) $.
	Suppose that $ \mu  $ is stationary.  
	Then, combining \eqref{EqMonotonicity} (which  holds  true even if $ \mu \notin D(\cF) $) and \eqref{EqStationary0}, we infer that  $  |\partial \cF| (S[\mu](t)) = |(S[\mu])'| (t) =0$ for almost every $  t\in (0,\infty)  $.
	Then, by the lower semi-continuity of $ |\partial \cF| $ (see \cite[2.4.10]{ambgigsav}]) and the fact that $ \lim_{t\downarrow 0} W_2(\mu,S[\mu](t) )=0 $, we conclude that $ |\partial \cF|(\mu) ~\leq~ \liminf_{t\downarrow 0 } |\partial \cF| (S[\mu](t)) ~=~0 . $ 
	%
	
	$ (ii) \Rightarrow (iii) $.
	Using \cite[10.4.13]{ambgigsav}, we have  that the Lebesgue density $ \rho  $ of $ \mu  $ belongs to the Sobolev space $ W^{1,1}_{loc}(\R) $\footnote{More precisely, in \cite[10.4.13]{ambgigsav} it is shown that $ L_F(\rho) \in W^{1,1}_{loc}(\R) $ for some function $ L_F: [0,\infty) \ra [0,\infty) $, which is defined right after \cite[(10.4.17)]{ambgigsav}. 
		However, in our case we have that $ L_F(z) =z  $ for all $ z\in [0,\infty) $.}. 
	Let $ m = \mr[\mu] $. 
	Then, by using again \cite[10.4.13]{ambgigsav},
	\begin{align}\label{EqStationary1}
		|\partial \cF|^2(\mu)   =  
		\int_{\R} \left| \frac{\partial_z \rho(z)}{\rho(z)} + \Psi'(z) - J m\right| ^2 d \mu(z)  = \int_{\R} \left| \frac{\partial_z \left( \rho(z) \eee^{\Psi(z) - Jm z}\right) }  {\rho(z) \eee^{\Psi(z) - Jm z} }   \right| ^2 d \mu(z). 
	\end{align}
	Since  $ |\partial \cF|(\mu) = 0$,   \eqref{EqStationary1} implies that  the map $ z \mapsto  \rho(z) \eee^{\Psi(z) - Jm z} $ is  constant $ \mu-$almost everywhere.
	Therefore,   for $ \mu-$a.e.\ $z, z' \in \R $,
	\begin{align}\label{EqStationary20}
		\rho(z) ~=~ \rho(z') \,\eee^{\Psi(z') -Jmz'}\,  \eee^{-\Psi(z)+Jm z }  .
	\end{align}
	By fixing $z' \in \R $ and by using  the definition of $ \varphi^* $ and  that $ \int_{\R} \rho(z) \, dz =1 $, \eqref{EqStationary20} implies that 
	\begin{align}\label{EqStationary2}
		\rho(z) 
		~=~ \eee^{- \varphi^*(Jm) }\, \eee^{-\Psi(z)+Jm z }.
	\end{align}
	In particular, combining \eqref{EqLegCramer} and \eqref{EqStationary2} yields that $ m =  (\varphi^*)'(Jm) $.
	And by using the second claim in \eqref{EqLegCramer}, we infer that $ \bar{H}'(m)= 0  $. 
	However, in Remark \ref{RemarkBarH} we have seen that there are only three solutions to this equation. This implies that 
	\begin{align}\label{EqStationary3}
		m ~\in ~ \{-m^\star, 0, m^\star\}.
	\end{align}
	Combining \eqref{EqStationary2} and \eqref{EqStationary3} yields part \emph{(iii)}.

	$ (iii) \Rightarrow (ii) $. 
	Combining the representation \eqref{EqStationary1} with the definition of the measures $ \mu^-,\mu^0 $ and $ \mu^+ $ yields part \emph{(ii)}.

	$ (ii) \Rightarrow (i) $. 
	From \cite[2.4.15 and 2.4.16]{ambgigsav}, we have that for all $ t >0 $,  
	\begin{align}\label{EqStationary4}
		|\partial \cF| (S[ \mu](t)) ~\leq~ \eee^{-\lambda t} |\partial \cF|(\mu)   ~=~  0 , 
	\end{align}
	where the parameter  $ \lambda  $ was introduced in Lemma \ref{LemProperties}.
	Again, using that 
	$  |(S[\mu])'| (t) = |\partial \cF| (S[\mu](t)) $ for almost every $  t\in (0,\infty)  $, \eqref{EqStationary4}  yields part \emph{(i)}.
\end{proof}

\subsection{Symmetry property}\label{SubsecSym}

In this section we show that  gradient flows for $ \cF $ admit a useful symmetry property. In the following we denote by  $ f_\#\mu  $  the image measure of a   measure $ \mu $ under a Borel map $ f $. 
\begin{lemma}\label{LemBSymmetry}
	Let $ \varsigma:\R\ra\R $ be defined by $ \varsigma(z)=-z $, and let $ \mu \in \Pw $.
	Then,    
	\begin{align}\label{EqBSymmetry}
		S[\varsigma_\#\mu](t) =  \varsigma_\#S[\mu](t) 	 \qfa t \in (0,\infty). 
	\end{align}
\end{lemma}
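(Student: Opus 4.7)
The plan is to exploit two invariance properties of the setup, then appeal to uniqueness of Wasserstein gradient flows. Concretely, I will show that the map $T_\varsigma: \mu \mapsto \varsigma_\#\mu$ is an isometry of $(\Pw, W_2)$ that preserves the functional $\cF$, and consequently that the curve $t \mapsto \varsigma_\#S[\mu](t)$ satisfies the defining properties of the gradient flow with initial value $\varsigma_\#\mu$. Uniqueness, which is built into Lemma \ref{LemProperties} (vi), then forces the identity \eqref{EqBSymmetry}.

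First, I would check that $T_\varsigma$ is a $W_2$-isometry. This is immediate: since $\varsigma$ preserves $|\cdot|$, the change of variables $(y,y')\mapsto(-y,-y')$ bijects $\CouplW{\mu,\nu}$ with $\CouplW{\varsigma_\#\mu,\varsigma_\#\nu}$ and preserves the cost in \eqref{EqDefiWassDist}. Next, I would verify that $\cF \circ T_\varsigma = \cF$: if $\mu$ has density $\rho$, then $\varsigma_\#\mu$ has density $\rho\circ\varsigma$, so the entropy term is unchanged; by Assumption \ref{AssBasin} (3) the potential term $\int\Psi\,d\mu$ is unchanged; and the interaction term is invariant because $\mr[\varsigma_\#\mu]=-\mr[\mu]$ enters squared. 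Combining these two facts, the definition \eqref{EqIntroMetricSlope} of the metric slope yields $|\partial\cF|(\varsigma_\#\nu) = |\partial\cF|(\nu)$ for every $\nu \in \Pw$, and similarly the metric derivative \eqref{EqIntroMetricDer} of $(\varsigma_\#S[\mu](t))_t$ coincides with that of $(S[\mu](t))_t$.

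Putting this together, the curve $t\mapsto\varsigma_\#S[\mu](t)$ lies in $\ACLio$, starts at $\varsigma_\#\mu$, and since $\cF(\varsigma_\#S[\mu](t)) = \cF(S[\mu](t))$ is locally absolutely continuous, the energy dissipation identity \eqref{EqMonotonicity} for $S[\mu]$ transfers verbatim to $\varsigma_\#S[\mu]$. Hence $\varsigma_\#S[\mu]$ is a Wasserstein gradient flow for $\cF$ with initial datum $\varsigma_\#\mu$. The contraction estimate \eqref{EqContraction} (applied with both arguments started at $\varsigma_\#\mu$) implies uniqueness of such gradient flows, so it must coincide with $S[\varsigma_\#\mu]$, which is exactly \eqref{EqBSymmetry}.

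There is no real obstacle here; the only point requiring a moment of care is the invariance of the metric slope, which I would justify by noting that since $T_\varsigma$ is a bijective $W_2$-isometry, the $\limsup$ in \eqref{EqIntroMetricSlope} over neighbourhoods of $\varsigma_\#\mu$ coincides with the $\limsup$ over neighbourhoods of $\mu$ after the substitution $\nu \mapsto \varsigma_\#\nu$, and $\cF$ is preserved under this substitution.
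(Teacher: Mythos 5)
Your proposal is correct and follows essentially the same route as the paper: one verifies that $\varsigma_\#$ preserves $\cF$ (via Assumption~\ref{AssBasin}~(3)), preserves $W_2$ (hence the metric slope and metric derivative), checks the initial condition, and concludes that $\varsigma_\#S[\mu]$ satisfies \eqref{EqMonotonicity} with datum $\varsigma_\#\mu$, so uniqueness forces \eqref{EqBSymmetry}. The only cosmetic difference is that you justify uniqueness through the contraction estimate \eqref{EqContraction} rather than citing the uniqueness statement from \cite[11.2.8]{ambgigsav} directly, and you prove the $W_2$-isometry by a coupling change of variables rather than the short two-sided inequality the paper uses.
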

\begin{proof}
	First note that 
	\begin{align}\label{EqBSymmetry0}
		\cF(\nu ) = \cF(\varsigma_\#\nu ) \qfa  \nu \in \Pw ,
	\end{align}
	and therefore,
	\begin{align}\label{EqBSymmetry1}
		|\partial \cF|(\nu ) = |\partial \cF|(\varsigma_\#\nu )\qfa  \nu \in \Pw.
	\end{align}
	Moreover, for all $ \nu \in \ACLio $ and $ 0 < s < t < \infty  $,
	\begin{align}\label{EqBSymmetry11}
		W_2(\nu_s,\nu_t)
		~= ~	W_2(\varsigma_\#(\varsigma_\#\nu_s),\varsigma_\#(\varsigma_\#\nu_t)) 
		~\leq ~	W_2(\varsigma_\#\nu_s,\varsigma_\#\nu_t)
		~\leq ~
		W_2(\nu_s,\nu_t).
	\end{align}
	Therefore, $ W_2(\nu_s,\nu_t)	= 	W_2(\varsigma_\#\nu_s,\varsigma_\#\nu_t)$, and we have that the metric derivatives coincide, i.e.,  
	\begin{align}\label{EqBSymmetry2}
		|\nu'| (t)= |(\varsigma_\#\nu)'|(t) \,  \text{ for almost every }   t\in (0,\infty) \text{ and  for all $ \nu \in \ACLio $} .
	\end{align}
	
	Then, by combining \eqref{EqMonotonicity} (which holds true even if $ \mu \notin D(\cF) $), \eqref{EqBSymmetry0}, \eqref{EqBSymmetry1} and \eqref{EqBSymmetry2}, we have that for almost every $ t\in (0, \infty) $,
	\begin{align}\label{EqBSymmetry22}
		\begin{split}
			\frac{d}{dt} \cF(\varsigma_\#S[\mu](t)) &~= ~ \frac{d}{dt} \cF(S[\mu](t)) = - \left|(S[\mu])'\right|^2(t)= - \left|(\varsigma_\#S[\mu])'\right|^2(t) , \qaq \\ 
			\frac{d}{dt} \cF(\varsigma_\#S[\mu](t)) &~= ~ \frac{d}{dt} \cF(S[\mu](t)) = - |\partial \cF|^2(S[\mu](t)) = -|\partial \cF|^2(\varsigma_\#S[\mu](t)) .
		\end{split}
	\end{align}
	Moreover, by using the same arguments as in \eqref{EqBSymmetry11}, we infer that
	\begin{align}\label{EqBSymmetry21}
		\lim_{t\downarrow 0} W_2(\varsigma_\#S[\mu](t) ,\varsigma_\#\mu  ) ~=~\lim_{t\downarrow 0} W_2(S[\mu](t) ,\mu  )~=~0.
	\end{align}
	Combining \eqref{EqBSymmetry22} and \eqref{EqBSymmetry21} yields that the curve $ (\varsigma_\#S[\mu](t))_{t\in(0, \infty) } $ is the gradient flow for $ \cF $ with initial value $ \varsigma_\#\mu $. 
\end{proof} 
\section{Convergence in the valleys}\label{SecBasinCompact}
In this chapter we first show some compactness property of the McKean-Vlasov paths in Lemma \ref{LemCompactness}.
Then, we use this result to prove Proposition \ref{PropIntroSmallBasin}.
\begin{lemma}\label{LemCompactness}
	Suppose Assumption \ref{AssBasin}.
	Let $ \mu \in D(\cF) $. 
	Then, there exist a  sequence $ (t_k)_k $ and $ \mu^* \in \{\mu^-,\mu^0, \mu^+\} $ such that $ \lim_{k\ra\infty  } t_k = \infty $, 
	\begin{align}
		\lim_{k\ra \infty } W_2(S[\mu](t_k),\mu^*) ~=~
		0 \qaq \lim_{t\ra \infty } \cF(S[\mu](t)) ~=~ \cF(\mu^* ).
	\end{align} 
\end{lemma}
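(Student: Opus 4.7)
The plan is to combine the energy dissipation along the gradient flow with the coercivity of $\cF$ and the characterization of critical points from Lemma \ref{LemStationary}. First, by the monotonicity \eqref{EqMonotonicity} and the lower bound \eqref{EqBasinLBF}, the map $t \mapsto \cF(S[\mu](t))$ is non-increasing and bounded below, so
\[
L := \lim_{t\to\infty} \cF(S[\mu](t)) \in \R
\]
exists. The energy identity \eqref{EqWGFDefi1} then yields $\int_0^\infty |\partial\cF|^2(S[\mu](r))\, dr \leq 2(\cF(\mu) - L) < \infty$, so I can extract a sequence $t_k \to \infty$ with $|\partial\cF|(S[\mu](t_k)) \to 0$.

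Since $\cF(S[\mu](t_k)) \leq \cF(\mu)$, the coercivity \eqref{EqBasinLBF} yields a uniform bound on $\int_\R |x|^{2+\epsilon}\, dS[\mu](t_k)(x)$. This provides both tightness (via Prokhorov) and uniform integrability of $|x|^2$, so along a further subsequence (not relabeled) $S[\mu](t_k) \to \mu^*$ in $(\Pw, W_2)$ for some $\mu^* \in \Pw$. Lower semicontinuity of the metric slope (\cite[2.4.10]{ambgigsav}) forces $|\partial\cF|(\mu^*) = 0$, and Lemma \ref{LemStationary} then identifies $\mu^* \in \{\mu^-, \mu^0, \mu^+\}$.

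It remains to identify $L = \cF(\mu^*)$. The $W_2$-lower semicontinuity of $\cF$ (which follows from lower semicontinuity of the entropy, Fatou applied to the bounded-below $\Psi$, and continuity of $\mu \mapsto \int z\, d\mu$ on $\Pw$) gives $\cF(\mu^*) \leq L$. For the reverse inequality I would apply the regularization estimate \eqref{EqRegu} with $\nu = \mu^*$ and initial datum $S[\mu](t_k)$; using the semigroup property from Lemma \ref{LemProperties}(vi), for any fixed $s > 0$,
\[
\cF(S[\mu](t_k + s)) ~\leq~ \cF(\mu^*) + \tfrac{\lambda}{2(\eee^{\lambda s} - 1)}\, W_2(S[\mu](t_k), \mu^*)^2,
\]
and sending $k \to \infty$ (the left-hand side tends to $L$, while $W_2(S[\mu](t_k),\mu^*)\to 0$) yields $L \leq \cF(\mu^*)$.

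The main obstacle is the $W_2$-precompactness of $\{S[\mu](t_k)\}_k$: tightness alone would be automatic from the moment bound, but upgrading weak subsequential convergence to $W_2$-convergence requires uniform integrability of $|x|^2$, which is precisely why the super-quadratic growth in Assumption \ref{AssBasin}(2) is imposed. The use of the regularization estimate \eqref{EqRegu} is the neat trick that converts the subsequential Wasserstein convergence into the full-time energy convergence without any further smoothness input on the trajectory.
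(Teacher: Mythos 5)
Your proof is correct, and Steps 1 and 2 (energy dissipation gives a sequence with vanishing slope; the coercivity from Assumption \ref{AssBasin}(2) and lower semicontinuity of $|\partial\cF|$ yield $W_2$-subsequential convergence to a stationary point, identified via Lemma \ref{LemStationary}) coincide with the paper's argument. In Step 3 you take a genuinely different route for the inequality $L \leq \cF(\mu^*)$: you apply the regularization estimate \eqref{EqRegu} with datum $S[\mu](t_k)$ and test point $\nu=\mu^*$, then use the semigroup property and send $k\to\infty$ at a fixed time offset $s>0$. The paper instead invokes the $\lambda$-convex slope bound from \cite[2.4.9]{ambgigsav},
\begin{align*}
W_2(\mu_{t_{n_k}},\mu^*)\,|\partial\cF|(\mu_{t_{n_k}}) ~\geq~ \Bigl(\cF(\mu_{t_{n_k}})-\cF(\mu^*) + \tfrac{\lambda}{2}\,W_2^2(\mu_{t_{n_k}},\mu^*)\Bigr)^+,
\end{align*}
and lets $k\to\infty$ using both $|\partial\cF|(\mu_{t_{n_k}})\to 0$ and $W_2(\mu_{t_{n_k}},\mu^*)\to 0$. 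Both tools are consequences of the $\lambda$-convexity of $\cF$ and deliver the same conclusion; yours has the small advantage of not needing the vanishing slope along the subsequence for this final step (only the $W_2$-convergence enters), while the paper's slope inequality avoids invoking the semigroup property. One point to keep explicit when writing this up: you need $\mu^*\in D(\cF)$ to apply \eqref{EqRegu}, which you do have from the lower-semicontinuity bound $\cF(\mu^*)\leq L<\infty$ established immediately beforehand — just make sure that order of presentation is preserved so there is no circularity.
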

\begin{proof}
	In the following let $ \mu_t = S[\mu](t)  $. 
	We prove this lemma in three steps.

	\textbf{Step 1.} [There exists a subsequence $ (t_n)_n $ such that $ \lim_{n\ra \infty } |\partial \cF| ( \mu_{t_n}) =0 $.]
	
	\noindent
	Note that the sequence $ (\cF  ( \mu_{t}))_{t\in [0,\infty)} $ is a continuous, monotone and bounded sequence of real numbers by    \eqref{EqMonotonicity} and \eqref{EqBasinLBF}. 
	Therefore, it converges, as $ t \ra \infty $, to a number $ L^* \in \R $. 
	In particular, by \eqref{EqMonotonicity}, 
	\begin{align}
		\begin{split}
			\int_0^\infty |\partial \cF|^2 ( \mu_{r})\, dr
			&~=~-
			\int_0^\infty  \frac{d}{dr} \cF ( \mu_{r})\, dr
			~=~
			-L^* + \cF ( \mu) 
			~<~\infty.
		\end{split}
	\end{align}
	This implies the claim of Step 1. 
	
	\textbf{Step 2.} [$ \lim_{k\ra \infty } W_2(\mu_{t_{n_k}},\mu^*) $ for some $ \mu^* \in \{\mu^-,\mu^0, \mu^+\} $ and a  subsubsequence $ (t_{n_k})_k $.]
	
	\noindent 
	By \eqref{EqBasinLBF},  the monotonicity of $ t \mapsto \cF(\mu_t) $  and the fact that $ \mu_0 = \mu \in D(\cF) $, we have that 
	\begin{align}\label{EqComp1}
		\sup_{n\in \N} \int_{\R} |x|^{2 + \epsilon} \, d\mu_{t_n}(x) ~\leq~  \sup_{n\in \N}\left( \frac{1}{c} \cF(\mu_{t_n}) + 1\right) 
		~\leq~    \frac{1}{c} \cF(\mu ) + 1 ~<~ \infty. 
	\end{align}
	Using \cite[6.8 (iii)]{vil}, this implies that there exist  a further subsequence $ (t_{n_k})_k $ and $ \mu^* \in \Pw $ such that $ \lim_{k\ra \infty } W_2(\mu_{t_{n_k}},\mu^*) $.
	It remains to show that   $ \mu^* \in \{\mu^-,\mu^0, \mu^+\} $.
	In order to do this, we use the lower semi-continuity of $ |\partial \cF| $ (\cite[2.4.10]{ambgigsav}]) and Step 1 to observe that  
	\begin{align}
		|\partial \cF|(\mu^*) ~\leq~ \liminf_{k\ra \infty } |\partial \cF| ( \mu_{t_{n_k}}) ~=~0 .
	\end{align}
	Combining this with Lemma \ref{LemStationary} yields the claim of Step 2.
	
	\textbf{Step 3.} [$\lim_{t\ra \infty } \cF(\mu_t) = \cF(\mu^* ) $.]
	
	\noindent
	First note that by the lower semi-continuity of $ \cF $ (see \cite[3.35]{BBGradFlow} or \cite[Section 9.3]{ambgigsav}]), we have that 
	\begin{align}
		L^* = \lim_{t\ra \infty } \cF(\mu_t)= \lim_{k\ra \infty } \cF(\mu_{t_{n_k}}) \geq    \cF(\mu^* ).
	\end{align}
	To show the other inequality, we use \cite[2.4.9]{ambgigsav}, and observe that for all 
	$ k \in \N $,
	\begin{align}\label{EqComp}
		|\partial \cF| ( \mu_{t_{n_k}}) ~\geq ~ \left( \frac{\cF(\mu_{t_{n_k}}) -\cF(\mu^* )}{W_2(\mu_{t_{n_k}},\mu^*)} + \frac \lambda 2 W_2(\mu_{t_{n_k}},\mu^*) \right)^+ , 
	\end{align}
	where the parameter  $ \lambda  $ was introduced in Lemma \ref{LemProperties}.
	Note that \eqref{EqComp} is equivalent to 
	\begin{align}
		W_2(\mu_{t_{n_k}},\mu^*)\, |\partial \cF| ( \mu_{t_{n_k}}) ~\geq ~ \left(  \cF(\mu_{t_{n_k}}) -\cF(\mu^* )  + \frac \lambda 2 W_2^2(\mu_{t_{n_k}},\mu^*) \right)^+ .
	\end{align}
	Taking the limit as $ k\ra \infty $ on both sides, and using Step 1 and Step 2,  implies that 
	\begin{align}
		0 ~\geq ~ \left(  L^*-\cF(\mu^* )   \right)^+ .
	\end{align}
	We conclude that $ L^*  \leq    \cF(\mu^* ). $
\end{proof}

With this compactness result in hand, we are able to prove Proposition \ref{PropIntroSmallBasin}.

\begin{proofofp}\emph{\ref{PropIntroSmallBasin}.}
	In the following let $ \mu_t = S[\mu](t)  $.
	It suffices to consider only the case that $ \mr[\mu] < 0 $.
	We know from Lemma \ref{LemCompactness} that there exists a subsequence  $ (\mu_{t_k})_k $     such that 
	\begin{align}
		\lim_{k\ra\infty  }W_2(\mu_{t_k},\mu^*) = 
		0  \qaq \lim_{t\ra \infty } \cF(\mu_{t}) = \cF(\mu^* ) \quad \text{ for some }\mu^* \in \{\mu^-,\mu^0, \mu^+\}.
	\end{align}
	We first show that $\mu^*=\mu^- $ (which implies   \eqref{EqSmallBasin0}), and then show that $\lim_{t\ra\infty  }W_2(\mu_{t },\mu^-) =    0    $ (which implies   \eqref{EqSmallBasin00}).
	
	\textbf{Step 1.} [ $\mu^*=\mu^- $. ]
	
	\noindent
	We  show that the cases $\mu^*=\mu^+ $ or $\mu^*=\mu^0 $ lead to contradictions.
	First suppose that $\mu^*=\mu^+ $. 
	Since the map $ t \mapsto \mr[\mu_t] $ is continuous and since $ \mr[\mu_0]=\mr[\mu]<0 $, we have that there exists $ t' \in (0,\infty) $ such that $ \mr[\mu_{t'}]=0 $.
	Then, by the monotonicity of $ t \mapsto \cF(\mu_t) $ and by Lemma \ref{LemRelationFH},  
	\begin{align}\label{EqSmallBasin}
		\cF(\mu^0)~\geq~ \cF(\mu) ~\geq~ \cF(\mu_{t'}) ~\geq~ \cF(\mu^0).
	\end{align}
	Hence,  $ \cF(\mu_{t'})= \cF(\mu) =\cF(\mu^0)$.
	Combining this with \eqref{EqWGFDefi1}, implies that $ \int_0^{t'}|\partial \cF|^2(\mu_r)  \,dr =0 $. This in turn yields that $ |\partial \cF|(\mu_r)  =0 $ for almost every $ r \in (0,t') $. Then, by the lower semi-continuity of $ |\partial \cF| $ (\cite[2.4.10]{ambgigsav}]),
	we infer that $ |\partial \cF|(\mu) ~\leq~ \liminf_{r\downarrow 0 } |\partial \cF| ( \mu_{r}) ~=~0 . $
	Therefore, we have that 
	\begin{align}\label{EqSmallBasin11}
		|\partial \cF|(\mu)  ~=~0 \qaq  \cF(\mu) ~=~\cF(\mu^0). 
	\end{align}
	By Lemma \ref{LemRelationFH} and Lemma \ref{LemStationary}, \eqref{EqSmallBasin11} implies   that $ \mu = \mu^0 $. 
	This yields to a contradiction, since  $ \mr[\mu]<0 $.
	The case $\mu^*=\mu^0 $ is treated analogously. 
	
	\textbf{Step 2.} [ $\lim_{t\ra\infty  }W_2(\mu_{t },\mu^-) =    0    $. ]
	
	\noindent
	Let $ (\mu_{s_n})_{n\in \N} $ be any subsequence of $ (\mu_{t})_{t\in[0,\infty)} $.
	Using the same compactness argument from Step 2 of the proof of Lemma \ref{LemCompactness}, we know that there exists a further subsequence $ (\mu_{s_{n_k}})_{k\in \N} $ such that $ \lim_{k\ra\infty  }W_2(\mu_{s_{n_k}},\mu') = 
	0  $ for some $ \mu' \in \Pw $. 
	In order to show the claim of Step 2, it remains to show that $ \mu' = \mu^- $.    
	First we notice that
	\begin{align}\label{EqSmallBasin1}
		\cF(\mu') ~\leq~ \liminf_{k\ra\infty} \cF(\mu_{s_{n_k}})
		~=~ \lim_{t\ra \infty } \cF(\mu_{t}) ~=~ \cF(\mu^- ). 
	\end{align}
	In view of Lemma \ref{LemRelationFH}, this implies that either $ \mu'= \mu^- $ or $ \mu'=\mu^+ $. 
	We now use similar arguments as in Step 1 to show that the latter case yields to a contradiction. 
	So suppose that $ \mu'=\mu^+ $. 
	Then, since $ \lim_{k\ra\infty  }W_2(\mu_{s_{n_k}},\mu^+) = 
	0  $ and $ \mr[\mu^+] > 0 $, there exists $ s' \in (0, \infty) $ such that $ \mr[\mu_{s'}] > 0 $.
	By the continuity of the map $ t \mapsto \mr[\mu_t] $ and since $ \mr[\mu_0]=\mr[\mu]<0 $, there must be a $ t' \in (0,s') $ such that $ \mr[\mu_{t'}]=0 $.
	Now we use the same arguments as in Step 1 to conclude  \eqref{EqSmallBasin11}.
	This in turn implies   that $ \mu = \mu^0 $, which yields to a contradiction, since  $ \mr[\mu]<0 $.
	This concludes  the proof.
\end{proofofp}

\section{Basin of attraction}\label{SecBasinOpen}

\begin{proposition}\label{PropBasin} 
	Suppose Assumption \ref{AssBasin}.
	Recall the definition of $ \mathcal{B}^- $  and $ \mathcal{B}^+ $ from \eqref{EqBasinDef}.
	Then, 
	$ \mathcal{B}^- $  and $ \mathcal{B}^+ $ are open subsets of $ \Pw $.
\end{proposition}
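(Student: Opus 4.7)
The plan is to prove only that $\mathcal{B}^+$ is open, since by Lemma \ref{LemBSymmetry} and the symmetry of $\Psi$ one has $\mathcal{B}^- = \varsigma_\#\mathcal{B}^+$, while $\nu \mapsto \varsigma_\#\nu$ is a $W_2$-isometry of $\Pw$. The key ingredient will be Proposition \ref{PropIntroSmallBasin}: any $\rho \in \Pw$ with $\cF(\rho) \leq \cF(\mu^0)$ and $\mr[\rho] > 0$ automatically belongs to $\mathcal{B}^+$. Combined with the semigroup property from Lemma \ref{LemProperties}(vi), it will suffice to exhibit, for each $\mu \in \mathcal{B}^+$, a time $T > 0$ and a radius $\delta > 0$ such that every $\nu \in \Pw$ with $W_2(\mu, \nu) < \delta$ satisfies $\cF(S[\nu](T)) \leq \cF(\mu^0)$ together with $\mr[S[\nu](T)] > 0$.

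Fix $\mu \in \mathcal{B}^+$. Since $W_2(S[\mu](t), \mu^+) \to 0$ and since $S[\mu](t_0) \in D(\cF)$ for any $t_0 > 0$ by the regularization estimate, applying Lemma \ref{LemCompactness} to the shifted flow $S[S[\mu](t_0)]$ and using uniqueness of the $W_2$-limit yields $\cF(S[\mu](t)) \to \cF(\mu^+)$. Invoking $\cF(\mu^+) < \cF(\mu^0)$ (Lemma \ref{LemRelationFH}) and $\mr[\mu^+] = m^\star > 0$ (Remark \ref{RemarkBarH}), I would then fix $T > 0$ large enough that
\begin{equation*}
\cF(S[\mu](T/2)) < \tfrac{1}{2}\bigl(\cF(\mu^+) + \cF(\mu^0)\bigr) \qquad \text{and} \qquad \mr[S[\mu](T)] > \tfrac{m^\star}{2}.
\end{equation*}

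For $\nu$ with $W_2(\nu, \mu) < \delta$, the contraction estimate \eqref{EqContraction} gives $W_2(S[\nu](t), S[\mu](t)) \leq \eee^{-\lambda t} \delta$ for all $t > 0$. Combined with the elementary bound $|\mr[\rho_1] - \mr[\rho_2]| \leq W_2(\rho_1, \rho_2)$ (via Cauchy--Schwarz on any coupling), this yields $\mr[S[\nu](T)] > m^\star/4 > 0$ as soon as $\delta < (m^\star/4)\,\eee^{\lambda T}$. For the energy, I would apply the regularization estimate \eqref{EqRegu} with initial point $S[\nu](T/2)$, time $T/2$ and reference measure $S[\mu](T/2) \in D(\cF)$, using the semigroup identity $S[\nu](T) = S[S[\nu](T/2)](T/2)$, to obtain
\begin{equation*}
\cF(S[\nu](T)) \leq \cF(S[\mu](T/2)) + \frac{\lambda}{2(\eee^{\lambda T/2} - 1)} W_2\bigl(S[\nu](T/2), S[\mu](T/2)\bigr)^2 \leq \cF(S[\mu](T/2)) + C_T\, \delta^2,
\end{equation*}
with $C_T$ a finite constant depending only on $T$ and $\lambda$. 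Shrinking $\delta$ further so that $C_T\delta^2 < \tfrac{1}{4}(\cF(\mu^0) - \cF(\mu^+))$ then forces $\cF(S[\nu](T)) < \cF(\mu^0)$.

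The main obstacle is the tension between needing $T$ large (so that $\cF(S[\mu](T/2))$ sits strictly below $\cF(\mu^0)$ and $\mr[S[\mu](T)]$ stays strictly positive) and the exponential blow-up $\eee^{-\lambda t}$ of the contraction constant (recall $\lambda < 0$). This is resolved cleanly by fixing $T$ first, depending only on $\mu$, and only afterwards shrinking $\delta$. Once both conditions at time $T$ are verified for such $\nu$, applying Proposition \ref{PropIntroSmallBasin} to $\rho = S[\nu](T)$ together with the semigroup property shows $\nu \in \mathcal{B}^+$, so that $\mathcal{B}^+$ contains a $W_2$-ball around $\mu$ and is open.
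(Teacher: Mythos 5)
Your argument is essentially the same as the paper's proof, modulo the cosmetic choice of proving $\mathcal{B}^+$ (rather than $\mathcal{B}^-$) open before invoking the symmetry Lemma \ref{LemBSymmetry}: both arguments pick a large time $T$ at which the reference flow has energy strictly below $\cF(\mu^0)$ and a mean bounded away from zero, then transfer these bounds to a $\delta$-neighbourhood via the contraction estimate \eqref{EqContraction} and the regularization estimate \eqref{EqRegu}, and conclude with Proposition \ref{PropIntroSmallBasin}. The tension you flag between needing $T$ large and $\delta$ small is resolved exactly as the paper does it, by fixing $T$ first and then shrinking $\delta$, so the proposal is correct and takes the same route.
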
 
\begin{proof}
	In view of Lemma \ref{LemBSymmetry}, it suffices to show the claim only for $ \mathcal{B}^- $.
	In the following abbreviate $ \Delta := \cF(\mu^0 ) - \cF(\mu^- ) $ and recall the definition of $ \lambda <0 $ from Lemma \ref{LemProperties}.
	
	Let $ \nu \in \mathcal{B}^- $.
	That is, $ \nu \in \Pw $ (in particular, it may be that $ \nu \notin D(\cF) $) and we have that $ \lim_{t\ra \infty} W_2(S[\nu](t),\mu^-) =0$. 
	Let $ h \in (0,\infty) $. 
	Note that, by using \eqref{EqRegu} and the semigroup property   (see  Lemma \ref{LemProperties}), we have that 
	\begin{align}\label{EqBasin} 
		S[\nu](h) \in D(\cF) \qaq \lim_{t\ra \infty} W_2(S[S[\nu](h)](t),\mu^-) = \lim_{t\ra \infty} W_2(S[\nu](t+h),\mu^-) =0, 
	\end{align}
	Then,  applying  Lemma \ref{LemCompactness} with $ \mu =  S[\nu](h)$ implies that  
	\begin{align}\label{EqBasin1} 
		\cF(\mu^- ) ~=~ \lim_{t\ra\infty}\cF(S[S[\nu](h)](t))~=~ \lim_{t\ra\infty}\cF(S[\nu](t+h))  ~=~ \lim_{t\ra\infty}\cF(S[\nu](t))  .
	\end{align}
	Therefore, we have that 
	\begin{align}\label{EqBasin11} 
		\lim_{t\ra \infty} W_2(S[\nu](t),\mu^-) =0 \qaq \lim_{t\ra\infty}\cF(S[\nu](t)) ~=~\cF(\mu^- )  .
	\end{align}
	This implies that there exists some $ t'>0 $ such that for all $ t \geq t' $,
	\begin{itemize}
		\setlength\itemsep{-0.2em} 
		\item $ W_2(S[\nu](t),\mu^-) ~\leq~ \frac{1}{4} m^\star $,
		\item $ \cF(S[\nu](t)) ~\leq~ \cF(\mu^- ) + \frac{1}{4}  \Delta $, and 
		\item  $ \eee^{\lambda t}~=~\eee^{-|\lambda| t} ~\leq~ \frac 1 2 $.
	\end{itemize}
	Set
	\begin{align}
		\delta ~:=~ \min\left\lbrace~ \eee^{2\lambda t'}\,  \frac{m^\star}{4} ~,~ \sqrt{\eee^{2\lambda t'} \, \frac{1}{|\lambda|}\, \frac{\Delta}{4}  }  ~\right\rbrace . 
	\end{align}
	We now show that $ B_\delta(\nu) =\{\mu \in \Pw \,|\, W_2(\mu,\nu)< \delta \} \subset  \mathcal{B}^- $.
	Let $ \mu \in B_\delta(\nu) $.
	We   have to show that $ \lim_{t\ra \infty } S[\mu](t) =  \mu^- $.
	In view of Proposition \ref{PropIntroSmallBasin}, it suffices to show that 
	\begin{enumerate}[(i)]
		\setlength\itemsep{-0.2em} 
		\item
		$ \mr[S[\mu](2t')] < 0 $, and that
		\item $ \cF(S[\mu](2t')) ~\leq~ \cF(\mu^0) $.
	\end{enumerate}
	In order to show (i), note that by the contraction estimate \eqref{EqContraction} and the definition of $ t'  $ and $ \delta $, 
	\begin{align}
		W_2(S[\mu](2t'), \mu^-)
		~\leq~
		W_2(S[\nu](2t'), \mu^-) + \eee^{-2\lambda t'} \delta 
		~\leq~ \frac{m^\star}{2}.
	\end{align}
	This implies claim (i).
	To show claim (ii), we use the regularization estimate  \eqref{EqRegu}, 
	and obtain that 
	\begin{align}
		\begin{split}
			\cF(S[\mu](2t')) &~\leq~
			\cF(S[\nu](t')) 
			+ 
			|\lambda|\, 
			W_2(S[\nu]( t'),S[\mu]( t'))^2~\leq~
			\cF(\mu^- ) + \frac{1}{2}  \Delta  ~<~   \cF(\mu^0 ).
		\end{split}
	\end{align}
	This concludes the proof of claim (ii).
\end{proof}

\section{Proof of Theorem \ref{ThmConv}}\label{SecProofThmConv}

\begin{proofoft}\emph{\ref{ThmConv}.}
	In the following let $ \mu_t = S[\mu](t)  $ for all $ t\in [0,\infty) $. 
	First suppose that $ \mu \in   D(\cF) $.
	We know from Lemma \ref{LemCompactness} that there exists a subsequence  $ (\mu_{t_k})_k $ and $ \mu^* \in \{\mu^-,\mu^0, \mu^+\} $ such that 
	\begin{align}\label{EqThmConv}
		\lim_{k\ra\infty  }W_2(\mu_{t_k},\mu^*) = 
		0  \qaq \lim_{t\ra \infty } \cF(\mu_{t}) = \cF(\mu^* ).  
	\end{align}
	
	Let $ (\mu_{s_n})_{n\in \N} $ be a  subsequence of $ (\mu_{t})_{t\in[0,\infty)} $.
	As in Step 2 of the proof of Lemma \ref{LemCompactness}, we infer the existence of a further subsequence, still denoted by  $ (\mu_{s_n})_{n\in \N} $, such that 
	\begin{align}\label{EqThmConv1}
		\lim_{n\ra\infty  }W_2(\mu_{s_{n}},\nu^*) = 
		0    \quad \text{ for some }  \nu^* \in \Pw  . 
	\end{align}
	It remains to show that $ \nu^* = \mu^* $.
	We divide the proof into the three cases $ \mu^* = \mu^- $, $ \mu^* = \mu^0 $ and $ \mu^* = \mu^+ $.

	\textbf{Case 1.} [ $\mu^* = \mu^- $. ]
	
	\noindent
	As in \eqref{EqSmallBasin1}, we infer that $ \cF(\nu^*)\leq \cF(\mu^-) $. 
	By Lemma \ref{LemRelationFH}, this implies that either  $ \nu^* = \mu^- = \mu^* $ or $ \nu^* = \mu^+ $.
	It remains to show that the latter case leads to a contradiction. 
	Note that  by \eqref{EqThmConv} and \eqref{EqThmConv1},
	\begin{itemize}
		\setlength\itemsep{-0.2em} 
		\item there exists $   T>0 $ such that $  \cF(\mu_t) \in [\cF(\mu^-), \cF(\mu^0)) $ for all $ t\geq T $, 
		\item there exists $   N\in\N $ such that $  s_{n } \geq T $ and $ \mr[\mu_{s_{n }}]>0 $ for all $ n\geq N $, and 
		\item there exists $   K\in\N $ such that $  t_k > s_N $ and $ \mr[\mu_{t_k}]<0 $ for all $ k\geq K $.
	\end{itemize}
	In particular, we have that 
	\begin{align}
		\cF(\mu_t) < \cF(\mu^0)    \text{ for all } t\in [s_N,t_K], \qquad  
		\mr[\mu_{s_{N }}]>0  , \qaq 
		\mr[\mu_{t_K}]<0  .
	\end{align} 
	Hence, there exists $ t'\in [s_N,t_K] $ such that  $  \cF(\mu_{t'}) < \cF(\mu^0) $  and $ \mr[\mu_{t'}]=0 $.
	This contradicts Lemma \ref{LemRelationFH}.
	
	\textbf{Case 2.} [ $\mu^* = \mu^+ $. ]
	
	\noindent
	This case is treated in the same way as Case 1.

	\textbf{Case 3.} [ $\mu^* = \mu^0 $. ]
	
	\noindent
	In this case we have that $ \cF(\nu^*)\leq \cF(\mu^0) $.
	There are three subcases given by  $ \mr[\nu^*]=0 $, $ \mr[\nu^*]>0 $ and $ \mr[\nu^*]<0 $.
	
	\textbf{Case 3.1.} [ $\mr[\nu^*]=0$. ]
	
	\noindent
	By Lemma \ref{LemRelationFH}, the combination of $ \cF(\nu^*)\leq \cF(\mu^0) $ and $ \mr[\nu^*]=0 $ yields that $  \nu^*=\mu^0=\mu^*  $.
	
	\textbf{Case 3.2.} [ $\mr[\nu^*]<0$. ]
	
	\noindent
	From Proposition \ref{PropIntroSmallBasin} we know that $ \nu^* \in \mathcal{B}^- $.
	Hence, by Proposition \ref{PropBasin}, there exists $ \delta >0 $ such that 
	$ B_\delta(\nu^*) \subset \mathcal{B}^- $. 
	In particular, by  \eqref{EqThmConv1}, there exists $ N \in \N $ such that $ \mu_{s_N} \in \mathcal{B}^- $.
	This contradicts \eqref{EqThmConv}. Indeed,   the fact that 
	$ \mu_{s_N} \in \mathcal{B}^- $ implies that 
	\begin{align}
		\lim_{t \ra \infty } \mu_{s_N+t}~=~\lim_{t \ra  \infty } S[\mu_{s_N}](t) ~=~ \mu^- \quad \text{ in } \Pw,
	\end{align}
	which contradicts the fact that  
	$ \lim_{k \ra  \infty } \mu_{ t_k}~=~  \mu^* ~=~ \mu^0 \quad \text{ in } \Pw.  $
	
	\textbf{Case 3.3.} [ $\mr[\nu^*]>0$. ]
	
	\noindent
	This case is treated in the same way as Case 3.2. This concludes the proof of this theorem  for the case $ \mu \in  D(\cF) $.
	
	Now let $ \mu \in \Pw \setminus D(\cF) $. Let $ h \in (0,\infty) $. 
	Applying the regularization estimate \eqref{EqRegu} yields  that $ S[\mu](h) \in D(\cF) $. 
	Hence, we have proven the claims  of this theorem for $ S[\mu](h)   $.
	That is, there exists $ \mu^* \in \{\mu^-,\mu^0, \mu^+\} $ such that  
	$ \lim_{t\ra\infty  }W_2(S\big[S[\mu](h)\big] (t) ,\mu^*) = 
	0  $ and $ \lim_{t\ra \infty } \cF(S\big[S[\mu](h)\big] (t)) = \cF(\mu^* ).  $ 
	Therefore, since 
	$ \lim_{t\ra\infty  }W_2(S\big[S[\mu](h)\big] (t) ,\mu^*) = \lim_{t\ra\infty  }W_2(S[\mu] (t) ,\mu^*)  $ and $ \lim_{t\ra \infty } \cF(S\big[S[\mu](h)\big] (t)) = \lim_{t\ra \infty } \cF(S[\mu](t)) $, we conclude the proof  of this theorem also for the case $ \mu \in \Pw \setminus D(\cF) $.
\end{proofoft} 

In the following corollary, we state some consequences of Theorem \ref{ThmConv} for the set $ \mathcal{B}^0 $.
\begin{corollary} \label{CorB0}
	\begin{enumerate}[(i)]
		\setlength\itemsep{-0.01em} 
		\item 
		$ \mathcal{B}^0 $ is closed,
		\item $ \mathcal{B}^0 \supset \{~\mu \in \Pw ~|~  \mu \text{ is symmetric, i.e.\ } \varsigma_\#\mu = \mu	~\} $, and 
		\item 	$ \mu^0  \in \partial  \mathcal{B}^0 $.
	\end{enumerate}
\end{corollary}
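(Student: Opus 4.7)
The plan is as follows.

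\textbf{Part (i).} Theorem \ref{ThmConv} asserts that for every $\mu\in\Pw$ the flow $S[\mu](t)$ converges in $W_2$ to one of the three stationary measures $\mu^-,\mu^0,\mu^+$. Since these three limits are distinct, the basins partition $\Pw$:
\[
\Pw ~=~ \mathcal{B}^- \,\sqcup\, \mathcal{B}^0 \,\sqcup\, \mathcal{B}^+ .
\]
By Proposition \ref{PropBasin}, $\mathcal{B}^-\cup\mathcal{B}^+$ is open, hence $\mathcal{B}^0$ is its complement in $\Pw$ and therefore closed.

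\textbf{Part (ii).} Let $\mu\in\Pw$ satisfy $\varsigma_\#\mu=\mu$. By Lemma \ref{LemBSymmetry}, $\varsigma_\#S[\mu](t)=S[\mu](t)$ for all $t>0$, so $\mr[S[\mu](t)]=0$ for every $t>0$. Theorem \ref{ThmConv} yields some $\mu^*\in\{\mu^-,\mu^0,\mu^+\}$ with $W_2(S[\mu](t),\mu^*)\ra 0$; since $W_2$-convergence preserves first moments (the map $\nu\mapsto \mr[\nu]$ is continuous on $\Pw$), we get $\mr[\mu^*]=0$. Among the three stationary measures only $\mu^0$ has vanishing mean (recall $\mr[\mu^{\pm}]=\pm m^\star\neq 0$), so $\mu^*=\mu^0$ and hence $\mu\in\mathcal{B}^0$.

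\textbf{Part (iii).} Since $\mu^0$ is stationary, $\mu^0\in\mathcal{B}^0$. In view of part (i) it suffices to produce a sequence in $\mathcal{B}^-\cup\mathcal{B}^+$ converging to $\mu^0$ in $W_2$. I use the curve of tilted Gibbs measures $m\mapsto \mu^{\varphi'(m)}$ from \eqref{EqEpsGCMeasure}--\eqref{EqBasinStationaryMeasures}. Pick a sequence $m_n\downarrow 0$ and set $\nu_n:=\mu^{\varphi'(m_n)}$. Three facts are needed:
\begin{enumerate}[(a)]
\item $\mr[\nu_n]=m_n>0$, which follows from the identity $(\varphi^*)'(\varphi'(m))=m$ in \eqref{EqLegCramer} together with $(\varphi^*)'(\sigma)=\int z\,d\mu^\sigma$;
\item $\cF(\nu_n)=\bar H(m_n)<\bar H(0)=\cF(\mu^0)$ for all sufficiently large $n$; this uses Lemma \ref{LemRelationFH} and the fact $\bar H''(0)<0$ recalled in Remark \ref{RemarkBarH}, which makes $0$ a strict local maximum of $\bar H$;
\item $W_2(\nu_n,\mu^0)\ra 0$; continuity of $\varphi'$ at $0$ with $\varphi'(0)=0$ reduces this to the continuity of $\sigma\mapsto \mu^\sigma$ in $W_2$ at $\sigma=0$, which follows from the explicit density \eqref{EqEpsGCMeasure} by dominated convergence (giving weak convergence) together with uniform control of second moments — uniform integrability of $|z|^2$ under $\mu^\sigma$ is guaranteed by the growth bound $\Psi(z)\geq c''(|z|^{2+\epsilon}-1)$ of Assumption \ref{AssBasin}(2).
\end{enumerate}
Combining (a) and (b), Proposition \ref{PropIntroSmallBasin} gives $\nu_n\in\mathcal{B}^+$ for large $n$, and (c) gives $\nu_n\ra\mu^0$ in $W_2$. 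Hence every $W_2$-neighbourhood of $\mu^0$ meets $\mathcal{B}^+\subset\Pw\setminus\mathcal{B}^0$, which together with $\mu^0\in\mathcal{B}^0$ proves $\mu^0\in\partial\mathcal{B}^0$.

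The only non-routine step is the $W_2$-continuity in (c); everything else is a direct assembly of previously established results. Even (c) is standard, since the Gibbs-type family $\mu^\sigma$ has tails controlled uniformly in $\sigma$ on compacts by the superlinear growth of $\Psi$.
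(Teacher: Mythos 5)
Your proof is correct and follows essentially the same route as the paper's: part (i) via complementation after the partition of $\Pw$ supplied by Theorem \ref{ThmConv} and Proposition \ref{PropBasin}, part (ii) via Lemma \ref{LemBSymmetry} together with Theorem \ref{ThmConv}, and part (iii) via the curve $\eta\mapsto\mu^{\varphi'(\eta)}$ of tilted measures together with Proposition \ref{PropIntroSmallBasin}. You simply supply the supporting details that the paper leaves implicit, most notably the $W_2$-continuity of $\sigma\mapsto\mu^{\sigma}$ at $\sigma=0$ in your step (c).
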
 
\begin{proof}
	To show part \emph{(i)}, we simply use Proposition \ref{PropBasin} and that, by Theorem \ref{ThmConv}, $ \Pw = \mathcal{B}^- \cup \mathcal{B}^0 \cup \mathcal{B}^+ $. 
	Part \emph{(ii)} is a straightforward consequence of Theorem \ref{ThmConv} and Lemma \ref{LemBSymmetry}.
	Finally, to show part \emph{(iii)}, we use that by Proposition \ref{PropIntroSmallBasin}, $ \mu^{\varphi'(-\eta)} \in \mathcal{B}^- $ and $ \mu^{\varphi'(  \eta)} \in \mathcal{B}^+ $ for all $ \eta >0 $, and that 
	$ \lim_{\eta \downarrow  0 } W_2(\mu^{\varphi'( -\eta)}, \mu^0) ~=~ \lim_{\eta \downarrow  0 } W_2(\mu^{\varphi'( \eta)}, \mu^0)~=~0$.
	%
\end{proof} 

\section{Some comments on the assumptions in this paper.}\label{SecAssu}

In this chapter we briefly discuss
the assumptions we make in this paper. 

We first discuss 
Assumption \ref{AssBasin}. 
Assumption \ref{AssBasin}  (1) ensures that $ z \mapsto \eee^{-\Psi(z)  } $ is integrable and that
$ 		\Psi'' \geq  \tilde{\lambda}  $   for some $ \tilde{\lambda} \in \R $.
The latter condition implies Lemma \ref{LemProperties} (ii), which is an essential ingredient in order to apply the Wasserstein gradient flow theory for the functional $ \cF $; see \cite[Section 10.4]{ambgigsav}. 
Assumption \ref{AssBasin}  (2) implies  that the absolute moments of order $ 2+\epsilon $ of the McKean-Vlasov evolution are uniformly bounded; see   \eqref{EqBasinLBF} and  \eqref{EqComp1}. 
This uniform boundedness in turn implies some compactness property, which is an essential ingredient for the proofs;  see, e.g., the proof of Lemma \ref{LemCompactness}. 
Assumption \ref{AssBasin}  (3)   implies some symmetry properties of $ \cF $ that simplify our analysis considerably; see the  Lemmas \ref{LemRelationFH} and \ref{LemBSymmetry}. 
We   believe that   Assumptions \ref{AssBasin}  (3) is not essential and can be circumvented. 
Assumption \ref{AssBasin}  (4)  is the main reason
why the system \eqref{EqSDEBasin} admits exactly the three stationary states $\mu^-,\mu^0, \mu^+  $.
Indeed, as it can be seen in \cite[3.4]{BaMe2019}, if Assumption \ref{AssBasin}  (4) is not true, then the  macroscopic Hamiltonian  $ \bar{H} $  admits at most one critical point.
This implies that the system   \eqref{EqSDEBasin} admits at most one stationary state, since the critical points of $ \bar{H} $ determine the stationary states of the system \eqref{EqSDEBasin}; see     the proof of Lemma \ref{LemStationary}.   
Assumption \ref{AssBasin}  (5) is a technical assumption taken from \cite[Chapter 3]{BaMe2019}, where it is used in the proof of the fact that  $ \bar{H} $  admits exactly three critical points. 
We   believe that also  Assumptions \ref{AssBasin}  (5) is not essential and can be circumvented. The latter is left for future research.

We finally note that the Wasserstein gradient flow theory holds in a much more generality than it is used here. Hence, one may wonder if  the  results of this paper can be extended to more general settings. 
Unfortunately, our arguments rely on the facts that the system \eqref{EqSDEBasin} is one-dimensional and that the interaction energy in \eqref{EqDefiF} is quadratic. 
It is also left for future research to generalize the results of this paper to multi-dimensional settings and for more general interaction energies.

\renewcommand{\baselinestretch}{0.89}\normalsize
\bibliography{/Users/kaveh/Dropbox/Reference_File/TEX-Bib.bib}

\textbf{Acknowledgement.}
The author would like to thank Anton Bovier, Matthias Erbar and Andr\'e Schlichting for numerous useful discussions.  
Moreover, he would like to thank the anonymous referees for reading the paper with
great care and for their valuable comments. 
\end{document}